\documentclass[11]{amsart}
\usepackage{amsfonts,tikz,amsmath,mathtools,amsthm,enumitem,mathrsfs,latexsym,amssymb,blkarray,afterpage,mathrsfs}
\usepackage[margin=1in, paperwidth=8.5in,paperheight=11in]{geometry}
\usepackage{mleftright}
\usepackage{mathtools}
\usepackage{xr-hyper}
\usepackage{hyperref}

\DeclarePairedDelimiter\floor{\lfloor}{\rfloor}

\usepackage{graphicx}

\allowdisplaybreaks

\makeatletter
\@addtoreset{thm}{section}
\makeatother

\makeatletter
%% See pp. 26f. of 'The LaTeX Companion,' 2nd. ed.
\def\@seccntformat#1{\@ifundefined{#1@cntformat}%
    {\csname the#1\endcsname\quad}%      default
    {\csname #1@cntformat\endcsname}}%   individual control
\newcommand{\section@cntformat}{\S\thesection\quad}
\newcommand{\subsection@cntformat}{\S\thesubsection\quad}
\makeatletter

\begin{document}

	\begin{abstract}
	
	Building on MacDonald's formula for the distance from a rank-one projection to the set of nilpotents in $\mathbb{M}_n(\mathbb{C})$, we prove that the distance from a rank $n-1$ projection to the set of nilpotents in $\mathbb{M}_n(\mathbb{C})$ is $\frac{1}{2}\sec\left(\frac{\pi}{\frac{n}{n-1}+2}\right)$. For each $n\geq 2$, we construct examples of pairs $(Q,T)$ where $Q$ is a projection of rank $n-1$ and $T\in\mathbb{M}_n(\mathbb{C})$ is a nilpotent of minimal distance to $Q$. Furthermore, we prove that any two such pairs are unitarily equivalent. We end by discussing possible extensions of these results in the case of projections of intermediate ranks.	
	
	\end{abstract}

\title[The Distance from a Rank $n-1$ Projection to the Nilpotents]{The Distance from a Rank $n-1$ Projection to the Nilpotent Operators on $\mathbb{C}^n$}
\thanks{Research supported in part by NSERC (Canada)}

\author[Z. Cramer]{{Zachary Cramer}}

\newcommand{\Addresses}{{% additional braces for segregating \footnotesize
  \bigskip
  \footnotesize

  Zachary~Cramer, \textsc{Department of Pure Mathematics, University of Waterloo,
    Waterloo, Ontario N2L 3G1}\par\nopagebreak
  \textit{E-mail address}: \texttt{zcramer@uwaterloo.ca}

}}

\date{\today}
\subjclass[2010]{47A58, 15A99} 
\keywords{Projection, Nilpotent, Matrix, Operator}
\maketitle

%\tableofcontents
%\pagenumbering{arabic}
%\newpage

%\setcounter{page}{1}

\theoremstyle{plain}
\newtheorem{thm}{Theorem}[section]
\newtheorem{lem}[thm]{Lemma}
\newtheorem{cor}[thm]{Corollary}
\newtheorem{prop}[thm]{Proposition}
\newtheorem{conj}[thm]{Conjecture}
\newtheorem{rmk}[thm]{Remark}

\theoremstyle{definition}
\newtheorem{defn}[thm]{Definition}
\newtheorem{exmp}[thm]{Example}
\newtheorem{exer}[thm]{Exercise}

%\makeatletter
%\•addtoreset{thm}{section}
%\makeatother

\newcommand{\cc}{\mathbb{C}}
\newcommand{\rr}{\mathbb{R}}
\newcommand{\dd}{\mathbb{D}}
\newcommand{\cldd}{\overline{\mathbb{D}}}
\newcommand{\epsi}{\varepsilon}
\newcommand{\nn}{\mathbb{N}}
\newcommand{\zz}{\mathbb{Z}}
\newcommand{\fy}{\varphi}
\newcommand{\sign}{\text{sign}}
\newcommand{\bfs}{\textbf{S}}
\newcommand{\triv}{\textbf{1}}
\newcommand{\bb}{\textbf{B}}
\newcommand{\alga}{\mathcal{A}}
\newcommand{\hilb}{\mathcal{H}}
\newcommand{\inv}{\mathrm{GL}}
\newcommand{\tr}{\mathrm{Tr}}
\newcommand{\nil}{\mathrm{Nil}}
\newcommand{\qnil}{\mathrm{QNil}}
\newcommand{\bh}{\mathcal{B(H)}}
\newcommand{\qh}{\mathcal{Q(H)}}
\newcommand{\ol}{\overline}
\newcommand{\dist}{\mathrm{dist}}
\newcommand{\nor}{\mathrm{Nor}}
\newcommand{\mc}{\mathcal}
\newcommand{\mm}{\mathbb{M}}
\newcommand{\au}{\sim_{au}}
\newcommand{\sorb}{\mathcal{S}}
\newcommand{\alg}{\mathrm{Alg}}
\newcommand{\bqt}{\mathrm{BQT}}
\newcommand\scalemath[2]{\scalebox{#1}{\mbox{\ensuremath{\displaystyle #2}}}}
\newcommand*\hedgehog{\includegraphics[width=1.2em]{Images/hedgehog4.png}} 
\renewcommand{\qedsymbol}{$\blacksquare$}
	
	\section{Introduction}
	
Let $\mc{H}$ be a complex Hilbert space of (possibly infinite) dimension $n$, and let $\mc{B(H)}$ denote the algebra of bounded linear operators acting on $\mc{H}$. Consider the sets
$$\begin{array}{l}\mc{P(H)}=\left\{P\in\mc{B(H)}:P=P^2=P^*\right\}\setminus\{0\},\,\,\text{and}\vspace{0.2cm}\\
\mc{N(H)}=\left\{N\in\mc{B(H)}:N^j=0\,\,\,\text{for some}\,\,\,j\in\nn\right\}\end{array}\smallskip$$ consisting of all non-zero orthogonal projections on $\mc{H}$ and all nilpotent operators on $\mc{H}$, respectively. We are interested in the problem of understanding the distance between these two sets, measured in the usual operator norm on $\mc{B(H)}$. This quantity will be denoted by $\delta_n$:
$$\delta_{n}\coloneqq\mathrm{dist}(\mc{P(H)},\mc{N}(\mc{H}))=\inf\left\{\|P-N\|:P\in\mc{P(H)},N\in\mc{N(H)}\right\}.$$

The problem of computing $\delta_n$ is by no means new to the world of operator theory. In 1972, Hedlund~\cite{Hedlund} proved that $\delta_2=1/\sqrt{2}$, and that $1/4\leq\delta_{n}\leq 1$ for all $n\geq 3$. This lower bound was increased to $1/2$ by Herrero~\cite{Her1} shortly thereafter. At this time Herrero also showed that $\delta_n=1/2$ whenever $n$ is infinite, thus reducing the problem to the case in which $\mc{H}=\cc^n$ for some $n\in\nn$, $n\geq 3$.

Various estimates on the values of $\delta_n$ were obtained in the early 1980's. One such estimate established by Salinas \cite{SalinasDistance} states that $$\begin{array}{ccc}\phantom{\text{for all}\,\,\,n\in\nn,} & \displaystyle{\frac{1}{2}\leq\delta_n\leq\frac{1}{2}+\frac{1+\sqrt{n-1}}{2n}} & \text{for all}\,\,\,n\in\nn.\end{array}$$ One may note that this upper bound approaches $1/2$ as $n$ tends to infinity, and hence Salinas' inequality leads to an alternative proof that $\delta_{\aleph_0}=1/2$. Herrero \cite{HerreroUnitaryOrbitsofPower} subsequently improved upon this upper bound for large values of $n$ by showing that $$\begin{array}{ccc}
\phantom{\text{for}\,\,\,n\geq 2,} & \displaystyle{\frac{1}{2}\leq\delta_n\leq \frac{1}{2}+\sin\left(\frac{\pi}{\floor{\frac{n+1}{2}}}\right)} & \text{for}\,\,\,n\geq 2,\end{array}$$ where $\floor{\cdot}$ denotes the greatest integer function. 

For many years the bounds obtained by Salinas and Herrero remained the best known. In 1995, however, MacDonald~\cite{MacDonaldProjections} established a new upper bound that would  improve upon these estimates for all values of $n$. In order to describe MacDonald's approach, we first make the following remarks.
\begin{itemize}
\item[(i)] Any two projections in $\mm_n(\cc)$ of equal rank are unitarily equivalent, and thus of equal distance to $\mc{N}(\cc^n)$. As a result, $\displaystyle{\delta_n=\displaystyle{\min_{1\leq r\leq n}\nu_{r,n}},}$ where $$\nu_{r,n}\coloneqq\inf\left\{\|P-N\|:P\in\mc{P}(\cc^n),\,rank(P)=r,N\in\mc{N}(\cc^n)\right\}.\smallskip$$

\item[(ii)] Straightforward estimates show that when computing $\nu_{r,n}$, one need only consider nilpotents of norm at most $2$. From here, one may use the compactness of the set of projections in $\mm_n(\cc)$ of rank $r$ and the set of nilpotents in $\mm_n(\cc)$ of norm at most $2$ to show that $\nu_{r,n}$ is achieved by some projection-nilpotent pair, and hence so too is $\delta_n$.\\

\item[(iii)]If $\{e_i\}_{i=1}^n$ denotes the standard basis for $\cc^n$, then 
$$\nu_{r,n}=\min\left\{\|P-N\|:P\in\mc{P}(\cc^n),\,rank(P)=r,N\in\mc{T}_n\right\}$$ where $\mc{T}_n$ is the algebra of operators that are strictly upper triangular as matrices written with respect to $\{e_i\}_{i=1}^n$.\\ 
\end{itemize}

The reduction from $\mc{N}(\cc^n)$ to $\mc{T}_n$ described in (iii) may seem innocuous at first glance. This alternate formulation, however, allows one to make use of a theorem of Arveson~\cite{Arveson} that describes the distance from an operator in $\mc{B(H)}$ to a nest algebra. The version of this result that we require was established by Power~\cite{PowerDistance}, and is presented below for the algebra $\mc{T}_n$. Note that for vectors $x,y\in\cc^n$, the notation $x\otimes y^*$ is used to denote the rank-one operator $z\mapsto \langle z,y\rangle x$ acting on $\cc^n$.

\begin{thm}[Arveson Distance Formula]\label{Arveson distance formula}
Let $\{e_i\}_{i=1}^n$ denote the standard basis for $\cc^n$. Define $E_0=0$ and $E_k=\sum_{i=1}^ke_i\otimes e_i^*$ for each $k\in\{1,2,\ldots, n\}$. For any $A\in\mm_n(\cc)$, $$\displaystyle{\dist(A,\mc{T}_n)=\max_{1\leq i\leq n}\|E_{i-1}^\perp AE_i\|.}\smallskip$$

\end{thm}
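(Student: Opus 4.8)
The inequality $\max_{1\le i\le n}\|E_{i-1}^\perp AE_i\|\le\dist(A,\mc{T}_n)$ is the routine half, and I would dispatch it first. The key observation is that every $T\in\mc{T}_n$ annihilates each of these corners: if $T$ is strictly upper triangular, then $TE_i$ has nonzero entries only in rows $1,\dots,i-1$, so $E_{i-1}^\perp TE_i=0$. Hence for any such $T$ and any index $i$,
$$\|E_{i-1}^\perp AE_i\|=\|E_{i-1}^\perp(A-T)E_i\|\le\|A-T\|,$$
since $E_{i-1}^\perp$ and $E_i$ are contractions. Taking the maximum over $i$ and the infimum over $T\in\mc{T}_n$ yields the lower bound.

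For the reverse inequality I would recast the problem as a matrix completion. Writing $M=A-T$, the entries of $M$ on and below the diagonal are forced to equal those of $A$ (because $T$ is strictly upper triangular), while the entries strictly above the diagonal are completely free. Thus $\dist(A,\mc{T}_n)=\min_M\|M\|$, the minimum taken over all completions $M$ of this fixed ``lower-triangular-plus-diagonal'' data. Moreover each corner $E_{i-1}^\perp ME_i$---the block of $M$ occupying rows $i,\dots,n$ and columns $1,\dots,i$---consists entirely of forced entries and equals $E_{i-1}^\perp AE_i$. So $d:=\max_i\|E_{i-1}^\perp AE_i\|$ depends only on the fixed data, and the task reduces to producing a single completion $M$ with $\|M\|\le d$.

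The engine for this is Parrott's theorem, which computes the norm-minimizing completion of a one-corner-free operator matrix: with the corner $X$ unknown,
$$\min_X\left\|\begin{pmatrix}B&X\\C&D\end{pmatrix}\right\|=\max\left\{\left\|\begin{pmatrix}B\\C\end{pmatrix}\right\|,\ \|(C\ D)\|\right\}.$$
I would run an induction on $n$ that peels off the last coordinate: split $M$ as a $2\times2$ block matrix relative to $E_{n-1}$, so that the last row $(C\ D)=E_{n-1}^\perp ME_n$ is forced (with norm $\le d$), the first $n-1$ columns $\binom{B}{C}$ constitute a fixed-plus-free subproblem of the same staircase type, and the remaining free block is the portion of the last column lying above the diagonal. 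Completing the first $n-1$ columns to norm $\le d$ by the inductive hypothesis and then invoking Parrott to fill the free column keeps the global norm at $\max\{\|\binom{B}{C}\|,\|(C\ D)\|\}\le d$.

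The crux---and the step I expect to fight with---is arranging the induction so that Parrott's two compatibility conditions are exactly the staircase bounds. The second-row condition $\|(C\ D)\|\le d$ is immediate, but the first-column condition requires that the completed block $\binom{B}{C}$, namely $M$ restricted to its first $n-1$ columns, already have norm $\le d$. This is not a single corner of $A$, so it cannot simply be read off; it must itself be arranged by the recursion. The clean fix is to prove a slightly stronger, rectangular statement---a min-norm completion formula for any staircase pattern of forced entries, with the bound given by the maximum norm over the maximal forced blocks---for which the induction closes precisely because triangularity makes the free entries decouple from the forced corners feeding each Parrott step. Verifying that every subblock invoked along the recursion is controlled by one of the original corners $E_{i-1}^\perp AE_i$ is the real content; the inequality $\dist(A,\mc{T}_n)\le d$ then follows by reassembling the completed matrix.
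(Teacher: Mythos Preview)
The paper does not supply its own proof of this statement: Theorem~\ref{Arveson distance formula} is quoted from the literature (Arveson's distance formula, in the finite-dimensional form due to Power), and is used as a black box throughout. So there is nothing in the paper to compare your argument against.

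That said, your sketch is the standard route to the finite-dimensional Arveson formula and is essentially correct. The lower bound is exactly as you wrote. For the upper bound, the Parrott-plus-induction strategy is the right one, and you have correctly located the one genuine subtlety: after peeling off the last column and applying Parrott, the block $\binom{B}{C}$ you must control is an $n\times(n-1)$ matrix with a staircase of forced entries, not a square instance of the original problem. Strengthening the inductive hypothesis to the rectangular staircase version (where the bound is the maximum norm over the maximal fully-forced lower-left blocks) is indeed the clean way to close the loop, and once stated it goes through without incident: the forced corners of the $n\times(n-1)$ subproblem are precisely $E_{i-1}^\perp AE_i$ for $1\le i\le n-1$, each bounded by $d$, so the inductive hypothesis gives a completion of $\binom{B}{C}$ with norm at most $d$, and Parrott then fills the last column. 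Your write-up would be complete once that rectangular lemma is formulated and its base case checked; as written it is a correct outline rather than a finished proof.
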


\noindent Using Arveson's formula, MacDonald successfully determined the exact value of $\nu_{1,n}$, the distance from a rank-one projection in $\mm_n(\cc)$ to $\mathcal{N}(\cc^n)$.
	\begin{thm}\cite[Theorem 1]{MacDonaldProjections}\label{MacDonald distance formula}
	For every positive integer $n$, the distance from the set of rank-one projections in $\mm_n(\cc)$ to $\mc{N}(\cc^n)$ is $$\nu_{1,n}=\frac{1}{2}\sec\left(\frac{\pi}{n+2}\right).\smallskip$$ 	
	\end{thm}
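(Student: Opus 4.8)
The plan is to combine the reduction in remark (iii) with the Arveson distance formula (Theorem~\ref{Arveson distance formula}) to convert the computation of $\nu_{1,n}$ into a purely scalar minimax problem, and then to solve that problem explicitly. Write a rank-one projection as $P = v\otimes v^*$ for a unit vector $v = (v_1,\dots,v_n)\in\cc^n$, and set $s_i = \sum_{j=1}^i |v_j|^2$, so that $0 = s_0\le s_1\le\cdots\le s_n = 1$. A direct computation gives $E_{i-1}^\perp P E_i = (E_{i-1}^\perp v)\otimes(E_i v)^*$, a rank-one operator whose norm is the product $\|E_{i-1}^\perp v\|\,\|E_i v\| = \sqrt{(1-s_{i-1})\,s_i}$. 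By remark (iii) and Theorem~\ref{Arveson distance formula},
\begin{equation*}
\nu_{1,n} = \min_{v}\,\dist(v\otimes v^*,\mc{T}_n) = \min_{0=s_0\le\cdots\le s_n=1}\ \max_{1\le i\le n}\sqrt{s_i(1-s_{i-1})},
\end{equation*}
where the minimum is over all nondecreasing sequences from $0$ to $1$, since any such sequence arises from the unit vector with $|v_j|^2 = s_j - s_{j-1}$. It therefore suffices to determine the minimax value of $\max_i s_i(1-s_{i-1})$.

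First I would produce the candidate optimiser by equalising all $n$ of the quantities $s_i(1-s_{i-1})$ to a common value $c^2$. This forces the first-order recurrence $\sigma_0 = 0$ and $\sigma_i = c^2/(1-\sigma_{i-1})$, which I would linearise by writing $\sigma_i = p_i/q_i$; the denominators then satisfy $q_i = q_{i-1} - c^2 q_{i-2}$, a constant-coefficient recurrence whose characteristic roots have modulus $c$ and arguments $\pm\theta$ once we set $c = \tfrac12\sec\theta$ (so that the roots of $x^2-x+c^2$ have product $c^2$ and sum $1$). Solving with the initial data yields the closed form
\begin{equation*}
\sigma_i = c\,\frac{\sin(i\theta)}{\sin((i+1)\theta)},\qquad i=0,1,\dots,n.
\end{equation*}
Imposing the remaining boundary condition $\sigma_n = 1$ and using the identity $2\cos\theta\,\sin((n+1)\theta) = \sin((n+2)\theta)+\sin(n\theta)$ collapses to $\sin((n+2)\theta)=0$, whence $\theta = \pi/(n+2)$ and $c = \tfrac12\sec\bigl(\pi/(n+2)\bigr)$. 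For this $\theta$ the angles $i\theta$ and $(i+1)\theta$ all lie in $(0,\pi)$, so $\sigma_i$ is a genuine nondecreasing sequence in $[0,1]$; the candidate is feasible and attains $\max_i\sqrt{\sigma_i(1-\sigma_{i-1})} = c$.

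It remains to show this value is optimal, which I expect to be the conceptual crux. I would argue by contradiction: if some feasible sequence $(s_i)$ satisfied $s_i(1-s_{i-1}) < c^2$ for every $i$, then since $x\mapsto c^2/(1-x)$ is increasing on $(-\infty,1)$ and $s_0 = 0 = \sigma_0$, an induction gives $s_i < \sigma_i$ for all $i$: indeed $s_i < c^2/(1-s_{i-1}) < c^2/(1-\sigma_{i-1}) = \sigma_i$, using $s_{i-1}<\sigma_{i-1}<1$. In particular $s_n < \sigma_n = 1$, contradicting $s_n = 1$. Hence $\max_i s_i(1-s_{i-1})\ge c^2$ for every feasible sequence, so the equalised sequence is optimal and $\nu_{1,n} = c = \tfrac12\sec\bigl(\pi/(n+2)\bigr)$. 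The only delicate points I anticipate are verifying the feasibility and monotonicity of $\sigma_i$ and confirming that the recurrence map remains well defined (denominators positive) throughout; both reduce to keeping the relevant angles strictly between $0$ and $\pi$.
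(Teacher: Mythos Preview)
Your argument is correct. The paper does not itself prove Theorem~\ref{MacDonald distance formula}; it is quoted from \cite{MacDonaldProjections}, and the paper only sketches MacDonald's strategy (then reproduces its analogue for rank $n-1$ in \S\ref{sec2}--\S3). Your reduction via remark~(iii) and Theorem~\ref{Arveson distance formula} to the scalar minimax $\min_{(s_i)}\max_i\sqrt{s_i(1-s_{i-1})}$, followed by equalising the blocks and linearising the M\"obius recursion to get $\sigma_i=c\,\sin(i\theta)/\sin((i{+}1)\theta)$ with $c=\tfrac12\sec\theta$ and $\sin((n{+}2)\theta)=0$, is exactly the computation MacDonald carries out and that the paper mirrors in \S3 for rank $n-1$.

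Where your proposal genuinely diverges is in the optimality step. MacDonald (and this paper, in Theorem~\ref{Arveson seminorms must be equal theorem}) first proves by a perturbation/Intermediate Value Theorem argument that any optimiser must have all Arveson blocks of equal norm, then lists the finitely many equalised candidates, and finally singles out the correct root using an \emph{a priori} lower bound on $\|P-N\|$ (the analogue of Proposition~\ref{distance estimate prop}). You bypass both of these devices: having exhibited the feasible equalised sequence for $\theta=\pi/(n{+}2)$, your one-line induction---if $s_i(1-s_{i-1})<c^2$ for all $i$ then $s_i<\sigma_i$ for all $i$, contradicting $s_n=1$---directly shows no sequence beats $c$, so there is no need to prove ``equality at the optimum'' separately or to invoke an external norm estimate to eliminate the spurious roots $\theta=k\pi/(n{+}2)$, $k\ge2$. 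This is more elementary and self-contained. What the paper's approach buys in return is structural information you do not obtain: it shows that \emph{every} optimal projection has all Arveson blocks equal (hence is of the explicit form in Theorem~\ref{summarizing thm}), which is needed for the uniqueness-up-to-unitary results of \S\ref{Section: Closest proj-nil pairs}. Your monotonicity and feasibility checks for $(\sigma_i)$ are routine, as you note; the identity $\sin^2(i\theta)-\sin((i{-}1)\theta)\sin((i{+}1)\theta)=\sin^2\theta$ handles monotonicity, and $(i{+}2)\theta\le\pi$ gives $\sigma_i\le1$.
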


	The expression for $\nu_{1,n}$ described above provides an upper bound on $\delta_n$ that is sharper than those previously obtained by Herrero and Salinas for all $n\in\nn$. In addition, MacDonald proved that this bound is in fact optimal when $n=3$ \cite[Corollary 4]{MacDonaldProjections}. These results led to the formulation of the following conjecture.
\begin{conj}[MacDonald, \cite{MacDonaldProjections}]\label{MacDonald's Conjecture}
The closest non-zero projections to $\mc{N}(\cc^n)$ are of rank $1$. That is,  
$$\delta_n=\nu_{1,n}=\frac{1}{2}\sec\left(\frac{\pi}{n+2}\right)\,\,\,\text{for all}\,\,\,n\in\nn.\smallskip$$ 

\end{conj}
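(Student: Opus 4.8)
The plan is to treat the two assertions of the conjecture separately. The value $\nu_{1,n}=\tfrac12\sec\!\big(\tfrac{\pi}{n+2}\big)$ is already supplied by Theorem~\ref{MacDonald distance formula}, so the entire burden falls on the identity $\delta_n=\nu_{1,n}$. By remark (i) this is equivalent to the family of inequalities $\nu_{r,n}\ge\nu_{1,n}$ for every $2\le r\le n$; that is, one must show that no projection of higher rank can sit strictly closer to $\mc{N}(\cc^n)$ than a rank-one projection does. My first move would be to recast each $\nu_{r,n}$ purely as a minimization of the Arveson quantity: combining remark (iii) with Theorem~\ref{Arveson distance formula}, for a fixed rank-$r$ projection $P$ one has $\min_{N\in\mc{T}_n}\|P-N\|=\dist(P,\mc{T}_n)=\max_{1\le i\le n}\|E_{i-1}^\perp P E_i\|$, and hence
\[
\nu_{r,n}=\min_{\substack{P\in\mc{P}(\cc^n)\\ \mathrm{rank}(P)=r}}\ \max_{1\le i\le n}\big\|E_{i-1}^\perp P E_i\big\|.
\]
The problem is thereby converted into a concrete finite-dimensional optimization over the manifold of rank-$r$ projections.

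I would attack the complementary case $r=n-1$ first, both because it is the natural dual of MacDonald's rank-one computation and because it admits an explicit parametrization. Writing $P=I-q\otimes q^*$ for a unit vector $q\in\cc^n$ and using the identity $E_{i-1}^\perp E_i=e_i\otimes e_i^*$, each compression collapses to the rank-at-most-two operator
\[
E_{i-1}^\perp P E_i=e_i\otimes e_i^*-(E_{i-1}^\perp q)\otimes(E_i q)^*,
\]
whose norm can be written in closed form in terms of the partial sums of the squared coordinates of $q$. Minimizing the largest such norm over all unit $q$ should then reduce, exactly as in MacDonald's analysis of the rank-one case (where $E_{i-1}^\perp P E_i=(E_{i-1}^\perp p)\otimes(E_i p)^*$ has norm $\|E_{i-1}^\perp p\|\,\|E_i p\|$), to a trigonometric extremal problem whose optimizer is governed by a secant expression. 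I would expect the resulting value to match the formula $\tfrac12\sec\!\big(\tfrac{\pi}{\frac{n}{n-1}+2}\big)$ advertised in the abstract, and to strictly exceed $\nu_{1,n}$ for every $n\ge 3$. The rigidity of the extremal configuration—essentially the fact that the maximizing index set forces the coordinates of $q$ into a prescribed pattern—should simultaneously yield the uniqueness-up-to-unitary-equivalence statement.

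The genuine obstacle is the passage from these two endpoints to \emph{all} intermediate ranks $2\le r\le n-2$. There is no evident monotonicity of $\nu_{r,n}$ in $r$, and the compression $P\mapsto E_{i-1}^\perp P E_i$ interacts with the idempotency constraint in a way that resists a clean reduction from rank $r$ down to rank $1$; in particular, one cannot simply restrict to a rank-one subprojection of an optimal $P$ and hope the Arveson maximum only decreases. What one would really want is a compression inequality or a duality forcing $\nu_{r,n}\ge\nu_{1,n}$ uniformly in $r$, and the absence of such a tool is precisely what keeps the full statement a conjecture rather than a theorem. The rank $n-1$ computation carried out below settles one further case and reinforces the pattern, but supplying the lower bound across the middle ranks is the crux that I do not expect to resolve by the endpoint methods alone.
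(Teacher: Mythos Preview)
The statement you were asked to prove is a \emph{conjecture} in the paper, not a theorem: the paper explicitly notes that it has been verified only for $n\le 4$ and remains open for all $n\ge 5$. There is therefore no ``paper's own proof'' to compare against. Your proposal is honest about this --- you correctly isolate the identity $\delta_n=\nu_{1,n}$ as the real content, reduce it via the Arveson formula to the family of inequalities $\nu_{r,n}\ge\nu_{1,n}$, and then acknowledge that the intermediate ranks $2\le r\le n-2$ are precisely the obstruction you cannot remove. That diagnosis matches the paper's own assessment.

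Your sketch of the $r=n-1$ endpoint is in fact the programme the paper carries out in \S\S2--3: parametrize $Q=I-e\otimes e^*$, compute $\|E_{i-1}^\perp Q E_i\|$ in closed form via the partial-trace sequence, argue that all of these norms must coincide at an optimizer, and extract a trigonometric recursion whose solution is $\tfrac12\sec\bigl(\pi/(\tfrac{n}{n-1}+2)\bigr)$. So on that front your plan and the paper agree. But neither you nor the paper supplies the missing ingredient --- a uniform lower bound $\nu_{r,n}\ge\nu_{1,n}$ across all ranks --- and without it the conjecture stays a conjecture. In short: your proposal is not a proof, and it is not meant to be; it accurately identifies both what is provable by these methods and where they run out, which is exactly the state of affairs the paper records.
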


\noindent Conjecture~\ref{MacDonald's Conjecture} has since been verified for $n=4$ \cite[Theorem 3.4]{MacDonaldIdempotents}, but remains open for all $n\geq 5$.

MacDonald's success in computing $\nu_{1,n}$ relied heavily on the simple structure of rank-one projections in $\mm_n(\cc)$. Specifically, the fact that every such projection can be written as a simple tensor $P=e\otimes e^*$ for some unit vector $e\in\cc^n$ made it feasible to obtain closed-form expressions for the norms $\|E_{i-1}^\perp PE_i\|$ in terms of the entries of $P$. MacDonald was then able to prove using the Arveson Distance Formula that the rank-one projections of minimal distance to $\mc{T}_n$ are such that $\|E_{i-1}^\perp PE_i\|=\nu_{1,n}$ for all $i\in\{1,2,\ldots,n\}$. An exact expression for $\nu_{1,n}$ was then derived through algebraic and combinatorial arguments.

Extending this approach to accommodate projections of intermediate ranks appears to be a formidable task; when $P$ is not expressible as a simple tensor $e\otimes e^*$ it becomes significantly more challenging to obtain explicit formulas for $\|E_{i-1}^\perp PE_i\|$. One may note, however, that the simple structure that led to success in the rank-one case can similarly be observed in projections $I-e\otimes e^*$ of rank $n-1$. It is therefore the goal of this paper to extend MacDonald's approach to determine the exact value of $\nu_{n-1,n}$.

We accomplish this goal in three stages. Motivated by the analogous result for projections of rank $1$, we show in \S2 that any projection $Q$ of rank $n-1$ that is of minimal distance to $\mc{T}_n$ must be such that $\|E_{i-1}^\perp QE_i\|=\nu_{n-1,n}$ for all $i$. In \S3, we then apply these equations to determine a list of candidates for $\nu_{n-1,n}$ via arguments adapted from \cite{MacDonaldProjections}. Finally, we prove that exactly one such candidate satisfies a certain necessary norm inequality from \cite{MacDonaldIdempotents}, and therefore deduce that  this value must be $\nu_{n-1,n}$.

In \S4 we outline a construction of the pairs $(Q,T)$ where $Q\in\mm_n(\cc)$ is a projection of rank $n-1$, $T$ is an element of $\mc{T}_n$, and $\|Q-T\|=\nu_{n-1,n}$. We prove that for each $n\in\nn$, any two such pairs are, in fact, unitary equivalent. Lastly, in \S5 we propose a possible formula for $\nu_{r,n}$ in the case of projections of arbitrary rank, which can be seen to closely resemble numerical estimates for $\nu_{r,n}$ when $n$ is small. We briefly explain how this formula could be used to answer MacDonald's conjecture in the affirmative.

	\section{Equality in Arveson's Distance Formula}\label{sec2}
	
		Fix an integer $n\geq 3$, and let $\{e_1,e_2,\ldots, e_n\}$ denote the standard basis for $\cc^n$. Define $E_0=0$ and  $E_k=\sum_{i=1}^ke_i\otimes e_i^*$ for each $k\in\{1,2,\ldots, n\}$. Throughout, $Q=(q_{ij})$ will denote a projection in $\mm_n(\cc)$ of rank $n-1$ that is of minimal distance to $\mc{T}_n$. Additionally, let $e\in\cc^n$ be a unit vector such that $Q=I-e\otimes e^*$, and let $P=(p_{ij})$ denote the rank-one projection $e\otimes e^*$. 
			
		The goal of this section is to derive a sequence of equations relating the entries of $Q$ to the distance $\nu_{n-1,n}$. Our strategy will be to use the algebraic relations satisfied by the entries of $Q$ to derive closed-form expressions for the norms $\|E_{i-1}^\perp QE_i\|$. Next, we will relate these expression to $\nu_{n-1,n}$ through the Arveson Distance Formula. 

In the case of rank-one projections, MacDonald obtained closed-form expressions for the norms in the Arveson Distance Formula by analysing the sequence of partial traces associated to such a projection. Specifically, this sequence $\{a_i\}_{i=0}^n$ is defined by setting $a_0=0$ and 
	\begin{equation}\label{defining ak}
	\begin{array}{rl}	
	\displaystyle{a_k=\sum_{i=1}^kp_{ii}=k-\sum_{i=1}^kq_{ii},} & k\in\{1,2,\ldots, n\}.
	\end{array}\smallskip
	\end{equation}
We may then express the entries of $e$ in terms of $\{a_i\}_{i=0}^n$ as 
$$e=\begin{bmatrix}\,z_1\sqrt{a_1-a_0} & z_2\sqrt{a_2-a_1} & \cdots & z_n\sqrt{a_n-a_{n-1}}\,\end{bmatrix}^T,$$
where $z_1,z_2,\ldots, z_n$ are complex numbers of modulus $1$. 

\begin{rmk}\label{simplifying the projections}\textup{By defining $U\in\mm_n(\cc)$ to be the diagonal unitary $U=\mathrm{diag}(z_1,z_2,\ldots, z_n)$ and replacing $Q$ with the unitarily equivalent projection $U^*QU$, we may assume that each of the complex numbers $z_i$ is equal to $1$. That is, we may assume that $q_{ij}\leq 0$ for all distinct indices $i$ and $j$. Since $U$ commutes with each of the projections $E_i$,  the norms $\|E_{i-1}^\perp QE_i\|$---and hence $\mathrm{dist}(Q,\mc{T}_n)$---are preserved by this transformation.}
\end{rmk}

Under the assumption of Remark~\ref{simplifying the projections}, one readily obtains useful identities among the entries of $P$ and $Q$. Notably, the entries on the off-diagonals of these projections can by described entirely be those on the diagonals:
\smallskip
\begin{equation}\label{projection from sequence}
\begin{array}{cccc} p_{ij}=\sqrt{p_{ii}p_{jj}} & \text{and} & q_{ij}=-\sqrt{(1-q_{ii})(1-q_{jj})} & \text{for all}\,\,i\neq j.\end{array}\smallskip
\end{equation}
One may then verify that\smallskip
	\begin{equation}\label{projection structure equation}
		\begin{array}{ccccc}
			p_{ij}p_{ik}=p_{ii}p_{jk} & \text{and} & q_{ij}q_{ik}=-q_{jk}(1-q_{ii}) & \text{for all}\,\,i,j,k\,\,\text{distinct}.
		\end{array}\smallskip
		\end{equation}
These identities will be used heavily throughout the proof of the following lemma, which serves as the first step toward obtaining closed-form descriptions of the norms $\|E_{i-1}^\perp QE_i\|$ in terms of the sequence $\{a_i\}_{i=0}^n$.

	\begin{lem}\label{Finding the entries and eigenvalues of Q_k^*Q_k}
	
		Let $Q=(q_{ij})$ be a projection in $\mm_n(\cc)$ of rank $n-1$, and let $\left\{a_i\right\}_{i=0}^n$ denote the non-decreasing sequence from equation~(\ref{defining ak}). For $k\in\{1,2,\ldots, n\}$, define $Q_k\coloneqq E_{k-1}^\perp QE_k$, and let $B_k$ denote the restriction of $Q_k^*Q_k$ to the range of $E_k$.\smallskip	
		
		\begin{itemize}
		
			\item[(i)]If $q_{ij}\leq 0$ for all $i\neq j$, then the entries of $B_k=(b_{ij})$ are given by
			$$
			b_{ij}=\left\{\begin{array}{ll}
			\phantom{-}q_{kk}-a_{k-1}(1-q_{kk}) & \text{if}\,\,i=j=k,\vspace{0.1cm}\\
			\phantom{-}(1-a_{k-1})(1-q_{ii}) & \text{if}\,\,i=j\neq k, \vspace{0.1cm}\\
			-(1-a_{k-1})q_{ij} & \text{if}\,\,i,j,k\,\,\text{are distinct},\vspace{0.1cm}\\
			\phantom{-}a_{k-1}q_{ij} & \text{otherwise.}
			\end{array}\right.
			$$
			
			\item[(ii)]
			We have
		$$\|Q_k\|^2=\frac{\mathrm{Tr}(B_k)+\sqrt{2\mathrm{Tr}(B_k^2)-\mathrm{Tr}(B_k)^2}}{2}.$$
		\end{itemize}
	\end{lem}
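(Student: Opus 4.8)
The plan is to prove the two parts in sequence, with part (i) supplying the entrywise data that makes the eigenvalue computation in part (ii) tractable. For part (i), I would first record a clean entrywise description of $B_k$. Since $Q_k = E_{k-1}^\perp Q E_k$ has $(l,i)$ entry equal to $q_{li}$ when $l \geq k$ and $i \leq k$ and zero otherwise, and since $Q = Q^*$ gives $\overline{q_{li}} = q_{il}$, the compression of $Q_k^* Q_k$ to the range of $E_k$ has entries $b_{ij} = \sum_{l=k}^n q_{il} q_{lj}$ for $i, j \in \{1, \ldots, k\}$. The crucial move is then to invoke $Q^2 = Q$, which yields $\sum_{l=1}^n q_{il} q_{lj} = q_{ij}$ and hence
\[ b_{ij} = q_{ij} - \sum_{l=1}^{k-1} q_{il} q_{lj}. \]
This converts the problem into evaluating the truncated sum $\sum_{l=1}^{k-1} q_{il} q_{lj}$, which I would handle by splitting into the four index configurations appearing in the statement.

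The evaluation of each truncated sum is where equations \eqref{projection from sequence} and \eqref{projection structure equation} enter, under the sign normalization of Remark~\ref{simplifying the projections}. For distinct indices I would use $q_{il} q_{lj} = q_{li} q_{lj} = -q_{ij}(1-q_{ll})$ from \eqref{projection structure equation}, and for repeated indices the relation $q_{il}^2 = (1-q_{ii})(1-q_{ll})$ from \eqref{projection from sequence}; in every case the factor $1 - q_{ll}$ telescopes against the definition $a_{k-1} = \sum_{l=1}^{k-1}(1-q_{ll})$ in \eqref{defining ak}. For instance, when $i, j, k$ are distinct the terms $l = i$ and $l = j$ contribute $q_{ij}(q_{ii}+q_{jj})$ while the remaining terms contribute $-q_{ij}\bigl(a_{k-1} - (1-q_{ii}) - (1-q_{jj})\bigr)$, so that subtracting from $q_{ij}$ collapses to $-(1-a_{k-1})q_{ij}$ after using $q_{ii} + (1-q_{ii}) = 1$. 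The diagonal cases $i=j=k$ and $i=j\neq k$, and the mixed case where exactly one of $i,j$ equals $k$, follow by the same bookkeeping.

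For part (ii), the key observation is that $B_k$ has rank at most two, so $\|Q_k\|^2 = \|Q_k^* Q_k\| = \lambda_{\max}(B_k)$ is governed by a quadratic. Writing $Q = I - e \otimes e^*$ and expanding, I would show $Q_k = e_k \otimes e_k^* - (E_{k-1}^\perp e) \otimes (E_k e)^*$, exhibiting $Q_k$ as a difference of two rank-one operators; hence $Q_k$, and therefore $B_k$, has at most two nonzero eigenvalues $\lambda_1 \geq \lambda_2 \geq 0$. Then $\mathrm{Tr}(B_k) = \lambda_1 + \lambda_2$ and $\mathrm{Tr}(B_k^2) = \lambda_1^2 + \lambda_2^2$ give $2\mathrm{Tr}(B_k^2) - \mathrm{Tr}(B_k)^2 = (\lambda_1 - \lambda_2)^2$, so the stated expression equals $\tfrac12\bigl[(\lambda_1+\lambda_2)+(\lambda_1-\lambda_2)\bigr] = \lambda_1 = \|Q_k\|^2$. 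Note this argument needs only $Q = I - e \otimes e^*$, not the sign normalization, so part (ii) is not sensitive to the hypothesis $q_{ij} \leq 0$.

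I expect the main obstacle to be the case-by-case bookkeeping in part (i): keeping track of which indices coincide, which terms are excluded from the truncated sum, and ensuring that every appearance of $1 - q_{ll}$ is correctly absorbed into $a_{k-1}$. The conceptual content of part (ii) is comparatively light once the rank-two structure of $Q_k$ is recognized.
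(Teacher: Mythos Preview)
Your proposal is correct and follows essentially the same route as the paper's proof: both use $Q^2=Q$ to rewrite $b_{ij}=q_{ij}-\sum_{\ell=1}^{k-1}q_{i\ell}q_{\ell j}$ and then evaluate the truncated sum case by case via equations~(\ref{projection from sequence}) and~(\ref{projection structure equation}) together with $a_{k-1}=\sum_{\ell=1}^{k-1}(1-q_{\ell\ell})$, and both prove part~(ii) by observing $Q_k=e_k\otimes e_k^*-E_{k-1}^\perp(e\otimes e^*)E_k$ has rank at most two and solving the resulting two-variable system in $\mathrm{Tr}(B_k)$ and $\mathrm{Tr}(B_k^2)$. Your remark that part~(ii) is independent of the sign normalization is also in agreement with the paper, which states (ii) without that hypothesis.
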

	
	\begin{proof}
	
		First, suppose that $q_{ij}\leq 0$ for all $i\neq j$. Since $Q$ is idempotent, its entries $q_{ij}$ satisfy the equation 
		$
		q_{ij}=\sum_{\ell=1}^nq_{i\ell}q_{\ell j}.$
	This equation, together with the identities from (\ref{projection structure equation}), allows one to compute the entries of $B_k$ directly. Indeed, 
	
	$$\begin{array}{rclcl}
	b_{kk}&=&\displaystyle{q_{kk}^2+q_{k+1,k}^2+\cdots+q_{nk}^2}\vspace{0.1cm}\\
	&=&\displaystyle{q_{kk}-q_{1k}^2-q_{2k}^2-\cdots-q_{k-1,k}^2}\vspace{0.1cm}\\
	&=&\displaystyle{q_{kk}-\sum_{\ell=1}^{k-1}(1-q_{\ell\ell})(1-q_{kk})}\hspace{0.2cm} = \hspace{0.2cm} \displaystyle{q_{kk}-a_{k-1}(1-q_{kk})},
	\end{array}$$
	
	\noindent and if $i\neq k$, then
	$$\begin{array}{rclcl}
	b_{ii}&=&\displaystyle{q_{ki}^2+q_{k+1,i}^2+\cdots+q_{ni}^2}\vspace{0.1cm}\\
	&=&\displaystyle{q_{ii}-q_{1i}^2-q_{2i}^2-\cdots-q_{k-1,i}^2}\vspace{0.1cm}\\
	&=&\displaystyle{q_{ii}-q_{ii}^2-\sum_{\ell=1,\ell\neq i}^{k-1}(1-q_{\ell\ell})(1-q_{ii})}\vspace{0.1cm}\\
	&=&\displaystyle{(1-q_{ii})\left((k-2)-\sum_{\ell=1}^{k-1}q_{\ell\ell}\right)}\hspace{0.2cm} = \hspace{0.2cm} \displaystyle{(1-a_{k-1})(1-q_{ii})}.
	\end{array}$$
	
	\noindent If $i,j,$ and $k$ are all distinct, then	
	$$\begin{array}{rclcl}
	b_{ij}&=&\displaystyle{q_{ki}q_{kj}+q_{k+1,i}q_{k+1,j}+\cdots+q_{ni}q_{nj}}\vspace{0.1cm}\\
	&=&\displaystyle{q_{ij}-q_{1i}q_{1j}-q_{2i}q_{2j}-\cdots-q_{k-1,i}q_{k-1,j}}\vspace{0.1cm}\\
	&=&\displaystyle{q_{ij}-q_{ii}q_{ij}-q_{ji}q_{jj}+\sum_{\ell=1,\ell\neq i,j}^{k-1}q_{ij}(1-q_{\ell\ell})}\vspace{0.1cm}\\
	&=&\displaystyle{q_{ij}\left((k-2)-\sum_{\ell=1}^{k-1}q_{\ell\ell}\right)}\hspace{0.2cm}=\hspace{0.2cm}\displaystyle{-(1-a_{k-1})q_{ij}.}
	\end{array}$$
	
	\noindent Lastly, we consider entries $b_{ij}$ for which $i<j=k$ or $j<i=k$. Since $B_k=B_k^*$, it suffices to establish the formula for $b_{ij}$ in the case that $i<j=k$. We have
	$$\begin{array}{rclcl}
	b_{ik}&=&\displaystyle{q_{ki}q_{kk}+q_{k+1,i}q_{k+1,k}+\cdots+q_{ni}q_{nk}}\vspace{0.1cm}\\
	&=&\displaystyle{q_{ik}-q_{1i}q_{1k}-q_{2i}q_{2k}-\cdots-q_{k-1,i}q_{k-1,k}}\vspace{0.1cm}\\
	&=&\displaystyle{q_{ik}-q_{ii}q_{ik}+\sum_{\ell=1,\ell\neq i}^{k-1}q_{ik}(1-q_{\ell\ell})}\vspace{0.1cm}\\
	&=&\displaystyle{q_{ik}\left((k-1)-\sum_{\ell=1}^{k-1}q_{\ell\ell}\right)}\hspace{0.2cm} = \hspace{0.2cm} \displaystyle{a_{k-1}q_{ik}}.
	\end{array}\vspace{0.1cm}$$
	
	We now turn our attention to the proof of (ii). Note that if $P$ denotes the rank-one projection $I-Q$, then  $$Q_k=E_{k-1}^\perp(I-P)E_k=e_{k}\otimes e_k^*-E_{k-1}^\perp PE_k.$$ 
	Thus, $Q_k$---and hence $B_k$---has rank at most $2$. It follows that $B_k$ has at most two non-zero eigenvalues $\lambda_0$ and $\lambda_1$, which can be obtained by solving the system of equations
	$$
	\left\{\begin{array}{lcl}
	\lambda_0+\lambda_1&=&\mathrm{Tr}(B_k), \vspace{0.1cm}\\
	\lambda_0^2+\lambda_1^2&=&\mathrm{Tr}(B_k^2).
	\end{array}\right.
	$$
	The solutions to this system are $\lambda=\frac{1}{2}\left(\tr(B_k)\pm \sqrt{2\tr(B_k^2)-\tr(B_k)^2}\right)$, and therefore the result follows.
	\end{proof}
	\smallskip
	
	\begin{thm}\label{function definition of Arveson seminorm thm}
	
		Let $Q=(q_{ij})$ be a projection in $\mm_n(\cc)$ of rank $n-1$, and let $\left\{a_i\right\}_{i=0}^n$ denote the non-decreasing sequence from equation~(\ref{defining ak}). If $f:[0,1]\times[0,1]\rightarrow\rr$ denotes the function 
$$
		f(x,y)=\frac{\sqrt{x^2y^2-4x^2y+2xy^2+4x^2-2xy+y^2-2y+1}-xy-y+2x+1}{2},\smallskip
$$
		
		\noindent then for each $k\in\{1,2,\ldots, n\}$, $\|E_{k-1}^\perp QE_k\|^2=f(a_{k-1},a_k)$.
		
	\end{thm}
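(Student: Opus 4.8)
The plan is to feed both parts of Lemma~\ref{Finding the entries and eigenvalues of Q_k^*Q_k} into the identity of part (ii). That part reduces the computation of $\|Q_k\|^2$ to knowing the two traces $\tr(B_k)$ and $\tr(B_k^2)$, while part (i) supplies every entry of $B_k$ explicitly in terms of the diagonal of $Q$. Before applying part (i) I would first replace $Q$ by a convenient representative of its unitary equivalence class: by Remark~\ref{simplifying the projections}, conjugating by a suitable diagonal unitary leaves each diagonal entry $q_{ii}$—and hence each $a_i$—unchanged and preserves every norm $\|E_{k-1}^\perp QE_k\|$, so both sides of the claimed identity are invariant under this move. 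I may therefore assume $q_{ij}\leq 0$ for all $i\neq j$, which is exactly the hypothesis needed to invoke Lemma~\ref{Finding the entries and eigenvalues of Q_k^*Q_k}(i).

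Next I would compute $\tr(B_k)$ by summing the diagonal entries given in part (i). Using the two elementary identities $\sum_{i=1}^{k-1}(1-q_{ii})=a_{k-1}$ (immediate from the definition~(\ref{defining ak})) and $1-q_{kk}=a_k-a_{k-1}$, the sum should collapse cleanly. Writing $x=a_{k-1}$ and $y=a_k$, I expect this to yield $\tr(B_k)=-xy-y+2x+1$, which is precisely the non-radical part of the numerator of $f$. In view of part (ii), it then remains only to show that $2\tr(B_k^2)-\tr(B_k)^2$ equals the expression under the radical in $f$.

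For $\tr(B_k^2)=\sum_{i,j}|b_{ij}|^2$ I would split the entries of $B_k$ into the three families dictated by part (i): the $k$ diagonal entries; the entries of row and column $k$ (the ``otherwise'' case $b_{ik}=a_{k-1}q_{ik}$, contributing with multiplicity two); and the remaining off-diagonal entries with $i,j,k$ distinct. In each off-diagonal sum I would replace $q_{ij}^2$ by $(1-q_{ii})(1-q_{jj})=p_{ii}p_{jj}$ using~(\ref{projection from sequence}), and then reduce the resulting double sums via $\sum_{i=1}^{k-1}p_{ii}=a_{k-1}$ together with $\sum_{i\neq j}p_{ii}p_{jj}=\bigl(\sum_i p_{ii}\bigr)^2-\sum_i p_{ii}^2$.

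The step I expect to be the crux is the following. After this reduction, both the ``interior'' off-diagonal sum and the diagonal sum over $i<k$ contain the quantity $\sum_{i=1}^{k-1}p_{ii}^2$, which genuinely depends on the individual diagonal entries and not merely on $a_{k-1}$ and $a_k$. The key observation—and the reason the final formula can depend only on $a_{k-1}$ and $a_k$—is that these two contributions both carry the factor $(1-a_{k-1})^2$ but appear with opposite signs, so they cancel exactly. Once this cancellation is in hand, $\tr(B_k^2)$ becomes a polynomial in $x$ and $p_{kk}=y-x$ alone; forming $2\tr(B_k^2)-\tr(B_k)^2$ and substituting $p_{kk}=y-x$ is then a routine expansion that I expect to reproduce the radicand $x^2y^2-4x^2y+2xy^2+4x^2-2xy+y^2-2y+1$ term by term. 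Combining this with part (ii) gives $\|Q_k\|^2=f(a_{k-1},a_k)$, as desired.
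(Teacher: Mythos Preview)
Your proposal is correct and follows essentially the same route as the paper: reduce via Remark~\ref{simplifying the projections}, invoke Lemma~\ref{Finding the entries and eigenvalues of Q_k^*Q_k}(ii), and evaluate $\tr(B_k)$ and $\tr(B_k^2)$ from the entry formulas in part (i), finishing by substituting $q_{kk}=1-(a_k-a_{k-1})$. The only cosmetic difference is that the paper computes $\tr(B_k^2)$ by first writing each diagonal entry $c_{ii}$ of $B_k^2$ and then summing, whereas you compute $\sum_{i,j}|b_{ij}|^2$ grouped by entry type; the cancellation of $\sum_{i<k}p_{ii}^2$ that you flag is exactly what occurs in the paper when the diagonal term $b_{ii}^2$ is absorbed into the row sum $\sum_{\ell<k}(1-a_{k-1})^2(1-q_{ii})(1-q_{\ell\ell})$.
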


	\begin{proof}
	
	By Remark~\ref{simplifying the projections}, we may conjugate $Q$ by a diagonal unitary if necessary and assume that $q_{ij}\leq 0$ for all $i\neq j$. Fix an integer $k\in\{1,2,\ldots, n\}$, define $Q_k\coloneqq E_{k-1}^\perp QE_k$, and let $B_k=(b_{ij})$ denote the restriction of $Q_k^*Q_k$ to the range of $E_k$. By Lemma~\ref{Finding the entries and eigenvalues of Q_k^*Q_k}~(ii),
	$$\|Q_k\|^2=\frac{\mathrm{Tr}(B_k)+\sqrt{2\mathrm{Tr}(B_k^2)-\mathrm{Tr}(B_k)^2}}{2}.\smallskip$$ 
	
	Using the expressions for the entries of $B_k$ derived in Lemma~\ref{Finding the entries and eigenvalues of Q_k^*Q_k} (i), we find that  
	$$
	\begin{array}{rcl}
	\mathrm{Tr}(B_k)&=&\displaystyle{\sum_{i=1}^{k-1}b_{ii}}+b_{kk}\vspace{0.1cm}\\
	&=&\displaystyle{\sum_{i=1}^{k-1}(1-a_{k-1})(1-q_{ii})+q_{kk}-a_{k-1}(1-q_{kk})}\vspace{0.1cm}\\
	&=&a_{k-1}(1-a_{k-1})+q_{kk}-a_{k-1}(1-q_{kk})\vspace{0.1cm}\\
	&=&q_{kk}+a_{k-1}(q_{kk}-a_{k-1})\vspace{0.1cm}\\
	&=&q_{kk}+a_{k-1}(1-a_k).\end{array}$$
	Moreover, if $B_k^2=(c_{ij})$, then
	$$\begin{array}{rcl}
	c_{kk}&=&\displaystyle{b_{kk}^2+\sum_{\ell=1}^{k-1}b_{k\ell}^2}\vspace{0.1cm}\\
	&=&\displaystyle{\left(q_{kk}-a_{k-1}(1-q_{kk})\right)^2+\sum_{\ell=1}^{k-1}a_{k-1}^2q_{k\ell}^2}\vspace{0.1cm}\\
	&=&\displaystyle{\left(q_{kk}-a_{k-1}(1-q_{kk})\right)^2+\sum_{\ell=1}^{k-1}a_{k-1}^2(1-q_{kk})(1-q_{\ell\ell})}\vspace{0.1cm}\\
	&=&\displaystyle{\left(q_{kk}-a_{k-1}(1-q_{kk})\right)^2+a_{k-1}^3(1-q_{kk}),}\end{array}
$$
	and for $i\leq k-1$,
	$$\begin{array}{rcl}
	c_{ii}&=&\displaystyle{b_{ii}^2+b_{ik}^2+\sum_{\ell=1,\ell\neq i}^{k-1}b_{i\ell}^2}\vspace{0.1cm}\\
	&=&\displaystyle{(1-a_{k-1})^2(1-q_{ii})^2+a_{k-1}^2q_{ik}^2+\sum_{\ell=1,\ell\neq i}^{k-1}(1-a_{k-1})^2q_{i\ell}^2}\vspace{0.1cm}\\
	&=&\displaystyle{(1-a_{k-1})^2(1-q_{ii})^2+a_{k-1}^2(1-q_{ii})(1-q_{kk})+\sum_{\ell=1,\ell\neq i}^{k-1}(1-a_{k-1})^2(1-q_{ii})(1-q_{\ell\ell})}\vspace{0.1cm}\\
	&=&\displaystyle{a_{k-1}(1-q_{ii})\left((1-a_{k-1})^2+a_{k-1}(1-q_{kk})\right)}.\end{array}$$
	Thus, 
	$$\begin{array}{rcl}
	\mathrm{Tr}(B_k^2)&=&\displaystyle{c_{kk}+\sum_{\ell=1}^{k-1}a_{k-1}(1-q_{\ell\ell})\left((1-a_{k-1})^2+a_{k-1}(1-q_{kk})\right)}\vspace{0.1cm}\\
	&=&\displaystyle{\left(q_{kk}-a_{k-1}(1-q_{kk})\right)^2+a_{k-1}^3(1-q_{kk})+a_{k-1}^2\left((1-a_{k-1})^2+a_{k-1}(1-q_{kk})\right).}
\end{array}\vspace{0.2cm}$$\vspace{0.1cm}

\vspace{-0.2cm} \noindent These descriptions of $\mathrm{Tr}(B_k)$ and $\mathrm{Tr}(B_k^2)$ allow one to express $\|Q_k\|^2$ as a function of $a_{k-1}$, $a_k$, and $q_{kk}$. The desired formula for $\|Q_k\|^2$ may now be obtained by writing ${q_{kk}=1-(a_k-a_{k-1})}$. 
	\end{proof}
	\smallskip
	
	Our first goal of this section is now complete: we have derived a closed-form expression for each norm $\|E_{i-1}^\perp QE_i\|$ in terms of the sequence $\{a_i\}_{i=0}^n$. In order to show that every such norm is equal to $\nu_{n-1,n}$, we must first investigate the properties of the function $f$ from Theorem~\ref{function definition of Arveson seminorm thm}.
	
%	From its definition it is easy to see that $\{a_i\}_{i=0}^n$ increases monotonically from $a_0=0$ to $a_n=\mathrm{Tr}(P)=1$. Moreover, the above description of $e$ in terms of the sequence $\{a_i\}_{i=0}^n$ demonstrates that any sequence increasing monotonically from $0$ to $1$ can arise as the sequence of partial traces of some rank-one projection. We record this fact below for future reference.
%	
%	\begin{lem}\label{every sequence increasing from 0 to 1 works lemma}
%		If $\{a_i\}_{i=0}^n$ is a sequence that increases monotonically from $a_0=0$ to $a_n=1$, then there is a rank-one projection $R=(r_{ij})$ in $\mm_n(\cc)$ such that $r_{ij}\geq 0$ for all $i$ and $j$, and $a_k=\sum_{i=1}^kr_{ii}$ for each $k\in\{1,2,\ldots, n\}$. \smallskip
%	\end{lem}	
	
	\begin{lem}\label{f increasing in x, decreasing in y lemma}
	
	If $f:[0,1]\times[0,1]\rightarrow\rr$ denotes the function
		$$f(x,y)=\frac{\sqrt{x^2y^2-4x^2y+2xy^2+4x^2-2xy+y^2-2y+1}-xy-y+2x+1}{2},\smallskip$$ 
		
		\noindent then $f$ is increasing in $x$ and decreasing in $y$. Moreover, $0\leq f(x,y)\leq 1$ whenever $0\leq x\leq y\leq 1$.
	
	\end{lem}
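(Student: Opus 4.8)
The plan is to recognize $f(x,y)$ as the larger root of a quadratic whose coefficients are simple polynomials, and then to read off every assertion from elementary properties of that root. Writing $g(x,y)=2x-xy-y+1$ for the non-radical part, so that $f=\tfrac{1}{2}(g+\sqrt{D})$ with $D$ the expression under the radical, I would first record the identity $g^2-D=4x(1-y)$. Setting $p(x,y)=x(1-y)$, this says $D=g^2-4p$, so $f$ is exactly the larger root of $h(t)=t^2-gt+p=(t-\lambda_1)(t-\lambda_0)$, where $\lambda_1\le\lambda_0=f$. Before anything else I would confirm that $f$ is genuinely real-valued by exhibiting the sum-of-squares decomposition $D=\left((2-y)x-(1-y)\right)^2+4x(1-y)^2$, which is manifestly nonnegative on $[0,1]\times[0,1]$ and vanishes only at the corner $(0,1)$; in particular $\sqrt{D}=2f-g>0$ off that single point.

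For the monotonicity I would differentiate $h(f)=0$ implicitly. Since $g_x=2-y$ and $p_x=1-y$, this gives $f_x=\frac{(2-y)f-(1-y)}{2f-g}=\frac{(2-y)f-(1-y)}{\sqrt{D}}$, and similarly $f_y=\frac{x-(x+1)f}{\sqrt{D}}$. Thus $f_x\ge 0$ reduces to the claim $f\ge\frac{1-y}{2-y}$, and $f_y\le 0$ reduces to $f\ge\frac{x}{x+1}$. Each of these lower bounds on the larger root $f$ I would verify by the elementary remark that, for an upward parabola, $h(c)\le 0$ forces $\lambda_1\le c\le\lambda_0=f$. The needed evaluations collapse to $h\!\left(\frac{1-y}{2-y}\right)=-\frac{(1-y)^3}{(2-y)^2}\le 0$ and $h\!\left(\frac{x}{x+1}\right)=-\frac{x^3}{(x+1)^2}\le 0$ on $[0,1]\times[0,1]$, which settle both monotonicity claims away from $(0,1)$; continuity of $f$ then extends them to the whole square.

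For the bounds on the triangle $0\le x\le y\le 1$, the lower bound is immediate: $g=x(2-y)+(1-y)\ge 0$ there (indeed on all of $[0,1]\times[0,1]$), so $f=\tfrac{1}{2}(g+\sqrt{D})\ge\tfrac{1}{2}g\ge 0$. For the upper bound I would again use the root-location principle: computing $h(1)=1-g+p=y-x\ge 0$ shows that $1$ lies outside the open interval $(\lambda_1,\lambda_0)$, while bounding the vertex by $\tfrac{g}{2}\le\tfrac{1}{2}\bigl(1+y(1-y)\bigr)\le\tfrac{5}{8}<1$ (using $x\le y$ together with the fact that $g$ is increasing in $x$) shows that $1$ lies to the right of the vertex. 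Together these force $1\ge\lambda_0=f$.

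The main obstacle is one of discovery rather than difficulty: one must spot the reformulation, namely the identity $g^2-D=4x(1-y)$ that exposes $f$ as the larger root of $t^2-gt+p$, after which every step is a one-line evaluation of $h$ at a well-chosen point. The only genuine care required is in (i) the sum-of-squares certificate for $D\ge 0$, without which $f$ is not even real, and (ii) the observation that $D$ vanishes only at $(0,1)$, so that the implicit-differentiation formulas for $f_x$ and $f_y$ are valid off a single point and monotonicity on the full square follows by continuity.
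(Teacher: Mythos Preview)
Your argument is correct, and it follows a genuinely different route from the paper's own proof. The paper treats the radicand directly: writing $g(x,y)$ for the expression under the square root, it verifies $g\ge 0$ by viewing $g(\cdot,y)$ as a convex quadratic and checking its vertex, then computes the first and second partials $f_x,f_{xx},f_y,f_{yy}$ explicitly, uses the sign of the second partials to deduce that $f_x$ and $f_y$ are monotone in $x$ and $y$ respectively, and finally checks the boundary values $f_x(0,y)$ and $f_y(x,1)$. For the inequality $0\le f\le 1$ on the triangle, the paper invokes the operator-theoretic interpretation $f(a_1,a_2)=\|E_1^\perp QE_2\|^2$ for a suitable rank-two projection $Q\in\mathbb{M}_3(\mathbb{C})$.

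Your approach is more structural: recognizing the identity $g^2-D=4x(1-y)$ (with your notation $g=2x-xy-y+1$) exposes $f$ as the larger root of $t^2-gt+x(1-y)$, after which the monotonicity reduces to two one-line evaluations $h\bigl(\tfrac{1-y}{2-y}\bigr)\le 0$ and $h\bigl(\tfrac{x}{x+1}\bigr)\le 0$, and the upper bound $f\le 1$ becomes the evaluation $h(1)=y-x\ge 0$ together with the vertex bound $g/2<1$. This avoids both the second-derivative computations and the appeal to the projection interpretation, at the cost of having to spot the factorization in the first place. Both approaches handle the singular point $(0,1)$ in the same way, by treating the boundary lines $y=1$ and $x=0$ separately (where $f(x,1)=x$ and $f(0,y)=1-y$) or by continuity.
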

	
	\begin{proof}
	
		Define $g:[0,1]\times[0,1]\rightarrow \rr$ by 
		\begin{align*}
		g(x,y)&=x^2y^2-4x^2y+2xy^2+4x^2-2xy+y^2-2y+1,
		\end{align*}
		so $f(x,y)=\frac{1}{2}(\sqrt{g(x,y)}-xy-y+2x+1)$. We begin by proving that $g(x,y)$ is non-negative on its domain and zero only at $(0,1)$. We will therefore verify that $f$ is well-defined on $[0,1]\times[0,1]$, and that the partial derivatives of $f$ exist at all points $(x,y)\neq(0,1)$. 
		
		Observe that for each fixed $y\in[0,1]$, the map 
		$$x\mapsto g(x,y)=(2-y)^2x^2-2y(1-y)x+(1-y)^2\smallskip$$
		defines a convex quadratic on $[0,1]$ with vertex at $x_0=y(1-y)/(2-y)^2$. If $y\in[0,1)$, then 
		$$g(x_0,y)=g\left(\frac{y(1-y)}{(2-y)^2},y\right)=\frac{4(1-y)^3}{(2-y)^2}>0.\smallskip$$
		Consequently, $g(x,y)>0$ for all $(x,y)\in[0,1]\times[0,1).$ Note as well that at $y=1$ we have $g(x,1)=x^2$. It follows that $g(0,1)=0$ and $g(x,y)>0$ for all other values of $(x,y)\in[0,1]\times[0,1]$. Thus, $f$ is well-defined, and the partial derivatives \vspace{0.1cm}
%%%		$$\begin{array}{ccc}
%%% 		f_x(x,y)=\displaystyle{\frac{g_x(x,y)}{4\sqrt{g(x,y)}}-\frac{y+2}{2}} &  \hspace{0.5cm} \vspace{0.5cm}	& \displaystyle{f_{xx}(x,y)=\frac{2g(x,y)g_{xx}(x,y)-\left(g_x(x,y)\right)^2}{8\left(g(x,y)\right)^{3/2}}}=\displaystyle{\frac{2(1-y)^3}{\left(g(x,y)\right)^{3/2}}}\\
%%%		\displaystyle{f_y(x,y)=\frac{g_y(x,y)}{4\sqrt{g(x,y)}}-\frac{x+1}{2}} & \hspace{0.5cm} & \displaystyle{f_{yy}(x,y)=\frac{2g(x,y)g_{yy}(x,y)-\left(g_y(x,y)\right)^2}{8\left(g(x,y)\right)^{3/2}}}=\displaystyle{\frac{2x^3}{\left(g(x,y)\right)^{3/2}}}
%%%		\end{array}$$
%%%		exist for all $(x,y)\neq(0,1)$.
		$$\begin{array}{lcl}
 		f_x(x,y)=\displaystyle{\frac{g_x(x,y)}{4\sqrt{g(x,y)}}+\frac{2-y}{2}},&  \hspace{0.5cm} \vspace{0.5cm}	& \displaystyle{f_{xx}(x,y)=\frac{2(1-y)^3}{\left(g(x,y)\right)^{3/2}}},\\
		\displaystyle{f_y(x,y)=\frac{g_y(x,y)}{4\sqrt{g(x,y)}}-\frac{x+1}{2}}, & \hspace{0.5cm} & \displaystyle{f_{yy}(x,y)=\frac{2x^3}{\left(g(x,y)\right)^{3/2}}}\\
		\end{array}\smallskip$$
		exist for all $(x,y)\neq(0,1)$.
		
		Our next task is to prove that $f(x,y)$ is increasing in $x$. First observe that $f(x,1)=x$ is clearly increasing. Furthermore, for every fixed $y\in[0,1)$, $f_{xx}(x,y)$ is well-defined and strictly positive for all $x$. Hence, $$f_x(x,y)=\displaystyle{\frac{xy^2-4xy+y^2+4x-y}{2\sqrt{g(x,y)}}+\frac{2-y}{2}}\smallskip$$ is an increasing function of $x$. We conclude that $f_x(x,y)\geq f_x(0,y)=1-y>0$ for every $x\in[0,1]$. Thus, $f$ is an increasing function of $x$ on $[0,1]$.
	
	We now use a similar argument to show that $f$ is a decreasing function of $y$. For $x=0$, we have that $f(0,y)=1-y$ is clearly decreasing. Now given a fixed $x\in(0,1]$, it is clear from above that $f_{yy}(x,y)$ is well-defined and strictly positive for all $y$. It follows that the partial derivative
	$$f_y(x,y)=\displaystyle{\frac{x^2y-2x^2+2xy-x+y-1}{2\sqrt{g(x,y)}}-\frac{x+1}{2}}\smallskip$$
	
	\noindent is an increasing function of $y$ on $[0,1]$. Hence
	$f_y(x,y)\leq f_y(x,1)=-x<0$ for every $y\in[0,1]$.
	This proves that $f$ is a decreasing function of $y$ on $[0,1]$, as desired.
	
	For the final claim suppose that $0\leq x\leq y\leq 1$. Consider the sequence $\{a_i\}_{i=0}^3$ defined by $a_0=0$, $a_1=x$, $a_2=y$, and $a_3=1$, as well as the vector $e=\begin{bmatrix}\sqrt{x} & \sqrt{y-x} & \sqrt{y}\end{bmatrix}^T.$ Note that $Q\coloneqq I-e\otimes e^*$ is a rank-two projection in $\mm_3(\cc)$ with partial trace sequence given by $\{a_i\}_{i=0}^3$. It then follows from Theorem~\ref{function definition of Arveson seminorm thm} that $$f(x,y)=f(a_1,a_2)=\|E_1^\perp QE_2\|^2,$$ and hence $0\leq f(x,y)\leq 1$.
%		It can be shown that, in fact,
%		$$\begin{array}{ccc}
%		f_{xx}(x,y)=\displaystyle{\frac{4(1-y)^3}{\left(g(x,y)\right)^{3/2}}} & \text{and} & f_{yy}(x,y)=\displaystyle{\frac{4x^3}{\left(g(x,y)\right)^{3/2}}},
%		\end{array}$$
%		hence $f_{xx}(x,y)>0$ and $f_{yy}(x,y)>0$ for all $x\neq 0$ and all $y\neq 0$. This means that $f_x:(0,1]\times[0,1)\rightarrow \rr$ is increasing
	\end{proof}
	\smallskip
	
%%%	\begin{lem}
%%%	
%%%		Let $Q=(q_{ij})$ be a rank $n-1$ projection in $\mm_n$ such that $q_{ij}\leq 0$ for all $i\neq j$.  Given any index $k\in\{1,2,\ldots, n-1\}$, there is a rank $n-1$ projection $Q^\prime=(q_{ij}^\prime)$ in $\mm_n$ such that $q_{ij}^\prime\leq 0$ for all $i\neq j$; $\|Q_i^\prime\|=\|Q_i\|$ for all $i\neq k,k+1$; and $\|Q_k^\prime\|=\|Q_{k+1}^\prime\|\leq\max\left\{\|Q_k\|,\|Q_{k+1}\|\right\}$.
%%%	
%%%	\end{lem}

	\begin{thm}\label{Arveson seminorms must be equal theorem}
	
		If $Q\in\mm_n(\cc)$ is a projection of rank $n-1$ that is of minimal distance to $\mc{T}_n$, then $\|E_{i-1}^\perp QE_i\|=\|E_{j-1}^\perp QE_j\|$ for all $i$ and $j$.
		
	\end{thm}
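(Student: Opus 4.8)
The plan is to translate the statement into a one–dimensional min–max problem over the partial–trace sequence and to resolve it by a perturbation (exchange) argument driven by the monotonicity of $f$. Combining the Arveson Distance Formula (Theorem~\ref{Arveson distance formula}) with Theorem~\ref{function definition of Arveson seminorm thm}, any rank $n-1$ projection with partial–trace sequence $\{a_i\}_{i=0}^n$ satisfies $\dist(Q,\mc{T}_n)^2=\max_{1\le k\le n}f(a_{k-1},a_k)$, where $a_0=0$, $a_n=1$, and the sequence is non-decreasing. Conversely every non-decreasing sequence $0=a_0\le a_1\le\cdots\le a_n=1$ is the partial–trace sequence of a genuine rank $n-1$ projection $Q=I-e\otimes e^*$ with $e=[\sqrt{a_1-a_0},\ldots,\sqrt{a_n-a_{n-1}}]^T$, and by Remark~\ref{simplifying the projections} the quantity $\dist(Q,\mc{T}_n)$ depends only on this sequence. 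Hence, if $Q$ is of minimal distance to $\mc{T}_n$, its sequence minimizes $\max_k f(a_{k-1},a_k)$ over all admissible sequences; writing $M$ for this minimal value, the theorem is equivalent to the assertion that at a minimizer every \emph{gap value} $f(a_{k-1},a_k)$ equals $M$.

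Suppose, for contradiction, that the gap values are not all equal to $M$, and let $S=\{k:f(a_{k-1},a_k)=M\}$, a nonempty proper subset of $\{1,\dots,n\}$. Decompose $S$ into maximal runs of consecutive indices. Each maximal run $\{k,k+1,\ldots,m\}$ corresponds to a block of nodes $a_{k-1},a_k,\ldots,a_m$ on which the gaps are constantly $M$, while the neighbouring gaps $f(a_{k-2},a_{k-1})$ (if $k>1$) and $f(a_m,a_{m+1})$ (if $m<n$) are \emph{strictly} below $M$. Since $S\neq\{1,\dots,n\}$, no run can contain both endpoints $a_0$ and $a_n$; consequently every run has at least one movable endpoint abutting a gap with strict slack.

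The key observation, which I would isolate as a short lemma, is that the monotonicity in Lemma~\ref{f increasing in x, decreasing in y lemma} ($f$ increasing in its first variable and decreasing in its second) forces the \emph{equalized value} of an equal–gap block to strictly decrease when its span is widened: for a fixed number of steps and a fixed endpoint, the common value $t$ realizing $f$–gaps all equal to $t$ is strictly decreasing in the right endpoint and strictly increasing in the left endpoint. I would then perturb all runs simultaneously, widening each run by nudging its movable endpoint(s) into the adjacent slack region and re-solving for the interior nodes so that the block remains equal–gap. Choosing the perturbation parameter small enough, every previously strict gap stays below $M$, while every run's common value drops strictly below $M$; the result is an admissible non-decreasing sequence from $0$ to $1$ whose maximal gap value is strictly less than $M$. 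This contradicts the minimality of $M$, and therefore all gap values equal $M$, which is precisely the claim $\|E_{i-1}^\perp QE_i\|=\|E_{j-1}^\perp QE_j\|$ for all $i,j$.

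I expect the principal obstacle to be making the simultaneous multi–run perturbation fully rigorous. Each interior node influences two gaps at once, and adjacent runs encroach on the shared slack regions between them, so the several small adjustments must be coupled and controlled by a single parameter; one must also justify, via continuity or the implicit function theorem, that the re-equalization of each block is well defined in a neighbourhood of the current configuration. The monotonicity sub-lemma itself reduces to the sign information $f_x\ge 1-y>0$ and $f_y\le -x$ recorded in the proof of Lemma~\ref{f increasing in x, decreasing in y lemma}. It is worth noting that I deliberately keep the argument local: pinning down the actual value of $M$ belongs to the subsequent analysis, so a perturbation near a known feasible configuration is preferable to a global comparison against an explicitly constructed equal–gap sequence, whose exact endpoint behaviour is exactly what remains to be determined.
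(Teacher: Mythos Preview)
Your reduction to the min–max problem over partial–trace sequences is exactly right and matches the paper's setup. The overall perturbation strategy is also correct in spirit, and with the sub-lemma you isolate (existence and endpoint–monotonicity of the equal–gap configuration on a block) the argument can be completed. However, the paper takes a considerably more elementary route that sidesteps precisely the obstacle you flag.

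Rather than perturbing all maximal runs simultaneously and re-equalizing each block, the paper moves \emph{one interior node at a time} using only the Intermediate Value Theorem. Concretely: let $\mu=\max_i\|Q_i\|$ and let $j$ be the largest index with $\|Q_j\|=\mu$. If $j<n$, then $f(a_j,a_{j+1})<\mu^2=f(a_{j-1},a_j)$; the function $h(x)=f(a_{j-1},x)-f(x,a_{j+1})$ is continuous on $[a_j,a_{j+1}]$, negative at $a_{j+1}$ (since $f(a_{j-1},a_{j+1})\le 1$) and positive at $a_j$, so it has a root $a_j'$ there. Replacing $a_j$ by $a_j'$ equates $\|Q_j\|$ and $\|Q_{j+1}\|$ at a value strictly below $\mu$ (strict because $a_j'>a_j$ and $f$ is strictly monotone in each variable), while all other norms are unchanged. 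This either shrinks $j$ or eventually lowers $\mu$, contradicting minimality. A symmetric argument handles the case $j=n$ by first pushing the rightmost sub-maximal index downward. No block re-equalization, no implicit function theorem, no simultaneous coupling is needed.

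What your approach buys is a conceptually clean picture (the optimum is characterized by the absence of any widenable maximal run), and it would generalize well to other min–max problems with the same monotonicity structure. What the paper's approach buys is that every step is a one-variable IVT application with nothing left to justify; the price you pay is the nontrivial sub-lemma about equal–gap configurations, which, while true, essentially re-derives the recursive structure that the paper only unpacks later in \S3.
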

	
	\begin{proof}
	
	By Remark~\ref{simplifying the projections}, we may assume without loss of generality that $Q=(q_{ij})$ where $q_{ij}\leq 0$ for all  $i\neq j$. Let $\{a_i\}_{i=0}^n$ denote the non-decreasing sequence from equation~(\ref{defining ak}), and for each $i\in\{1,2,\ldots, n\}$, define $Q_i\coloneqq E_{i-1}^\perp QE_i$. Suppose to the contrary that not all values of $\|Q_i\|$ are equal. Define 
	$$\mu\coloneqq\max_{1\leq i\leq n}\|Q_i\|,$$ and let $j$ denote the largest index in $\{1,2,\ldots, n\}$ such that $\|Q_j\|=\mu$.
	
	First consider the case in which $j=n$. Let $k$ denote the largest index in $\{1,2,\ldots, n-1\}$ such that $\|Q_k\|<\mu$. With $f$ as in Theorem~\ref{function definition of Arveson seminorm thm}, we have that $$f(a_{k-1},a_k)=\|Q_k\|^2<\|Q_{k+1}\|^2=f(a_{k},a_{k+1}).$$
	Thus, if $g:\left[a_{k-1},a_{k}\right]\rightarrow \rr$ is given by
	$$g(x)=f(a_{k-1},x)-f(x,a_{k+1}),$$ then 
$g(a_{k})=f(a_{k-1},a_{k})-f(a_{k},a_{k+1})< 0,$ while
$g(a_{k-1})=1-f(a_{k-1},a_{k+1})\geq 0$ by Lemma~\ref{f increasing in x, decreasing in y lemma}. Since $g$ is continuous on its domain, the Intermediate Value Theorem  gives rise to some $a_k^\prime\in\left[a_{k-1},a_{k}\right]$ such that $g(a_k^\prime)=0$. By replacing $a_k$ with $a_k^\prime$ in the sequence $\{a_i\}_{i=0}^n$, one may equate $\|Q_k\|$ and $\|Q_{k+1}\|$ while leaving the remaining norms $\|Q_i\|$ unchanged. Most importantly, since $a_k^\prime\leq a_k$, Lemma~\ref{f increasing in x, decreasing in y lemma} implies that the new common value of $\|Q_k\|$ and $\|Q_{k+1}\|$ is strictly less than $\mu$. 

This argument may now be repeated to successively reduce the norms $\|Q_i\|$ for $i>k$ to values strictly less than $\mu$. At the end of this process, either the new largest index $j$ at which the maximum norm occurs is strictly less than $n$, or the maximum $\mu$ decreases. Of course, the latter cannot happen as $Q$ was assumed to be of minimal distance to $\mc{T}_n$.

Thus, we may assume that the largest index $j$ at which $\mu$ occurs is strictly less than $n$. In this case we have that $$f(a_j,a_{j+1})=\|Q_{j+1}\|^2<\|Q_j\|^2=f(a_{j-1},a_j).\smallskip$$ As above, we may invoke the Intermediate Value Theorem to obtain a root $a_j^\prime$ of the continuous function $$h(x)\coloneqq f(a_{j-1},x)-f(x,a_{j+1})$$ on the interval $\left[a_j,a_{j+1}\right]$. By replacing $a_j$ with $a_j^\prime$ in the sequence $\{a_i\}_{i=0}^n$, one may equate $\|Q_j\|$ and $\|Q_{j+1}\|$ while preserving all other norms $\|Q_i\|$. Since $a_j^\prime\geq a_j$, Lemma~\ref{f increasing in x, decreasing in y lemma} demonstrates that the new common value of $\|Q_j\|$ and $\|Q_{j+1}\|$ is strictly less than $\mu$. Thus, this process either decreases the largest index $j$ at which the maximum norm occurs, or reduces the value of $\mu$. Since this argument may be repeated for smaller and smaller values of $j$, eventually $\mu$ must decrease---a contradiction.
%%%One may repeat the above argument to successively reduce the Arveson seminorms of the projection $Q$. Eventually, either the maximum $\mu$ is decreased---which cannot happen, as the distance from $Q$ to the nilpotents was assumed to be minimal---or the maxiumum is 
%%%	
%%%	Observe that for any given $y\in[x_{k-1},x_{k+1}]$, one may replace the sequence $\{x_i\}_{i=0}^n$ with a new sequence $\{y_i\}_{i=0}^n$ defined by 
%%%	$$y_i=\left\{\begin{array}{ll}
%%%	x_i & \text{if}\,\,i\neq k\\
%%%	y & \text{if}\,\, i=k
%%%	\end{array}\right.$$
%%%	that describes a rank $n-1$ projection $Q^\prime$ satisfying $\|Q_i^\prime\|=\|Q_i^\prime\|$ for all $i\neq k,k+1$. Define By Lemma~\ref{f increasing in x, decreasing in y lemma}, we have that $f(y_{k-1},y_k)\leq 1$. 
	\end{proof}
	\smallskip
	
	\section{Computing the Distance}

	We will now utilize the results of \S2 to determine the precise value of $\nu_{n-1,n}$. The first step in this direction is the following proposition, which applies Theorem~\ref{Arveson seminorms must be equal theorem} to obtain a recursive description of the sequence $\{a_i\}_{i=0}^n$.

	\begin{prop}\label{recursive formula prop}
	
		Let $Q\in\mm_n(\cc)$ be a projection of rank $n-1$ that is of minimal distance to $\mc{T}_n$. If $\{a_i\}_{i=0}^n$ denotes the non-decreasing sequence from equation~(\ref{defining ak}), then
		$$a_k=\displaystyle{\frac{-\nu_{n-1,n}^4+2\nu_{n-1,n}^2a_{k-1}+\nu_{n-1,n}^2-a_{k-1}}{\nu_{n-1,n}^2a_{k-1}+\nu_{n-1,n}^2-a_{k-1}}}\smallskip$$
		for each $k\in\{1,2,\ldots, n\}$.
	
	\end{prop}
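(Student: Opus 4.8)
The plan is to first pin down the common value of the Arveson seminorms using the results of \S\ref{sec2}, and then to solve the resulting scalar equation for $a_k$ by elementary algebra. Throughout I write $\nu\coloneqq\nu_{n-1,n}$ for brevity.

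The first step combines the three main results of the previous section. Since $Q$ is of minimal distance to $\mc{T}_n$, the Arveson Distance Formula (Theorem~\ref{Arveson distance formula}) gives
$$\nu=\dist(Q,\mc{T}_n)=\max_{1\leq i\leq n}\|E_{i-1}^\perp QE_i\|.$$
By Theorem~\ref{Arveson seminorms must be equal theorem} these norms are all equal, so each of them coincides with the maximum $\nu$. Feeding this into Theorem~\ref{function definition of Arveson seminorm thm} produces the system of scalar equations $f(a_{k-1},a_k)=\nu^2$ for $k\in\{1,2,\ldots,n\}$, where $f$ is the function from that theorem. The problem is thereby reduced to solving a single equation $f(a_{k-1},a_k)=\nu^2$ for $a_k$ in terms of $a_{k-1}$.

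For the second step I would isolate the radical. Writing $g$ for the radicand of $f$ as in Lemma~\ref{f increasing in x, decreasing in y lemma}, the equation $f(a_{k-1},a_k)=\nu^2$ rearranges to
$$\sqrt{g(a_{k-1},a_k)}=2\nu^2+a_{k-1}a_k+a_k-2a_{k-1}-1.$$
The right-hand side is non-negative, being equal to a square root, so squaring is an equivalence and introduces no spurious root. The decisive observation is that, viewed as polynomials in the single variable $y=a_k$ with $x=a_{k-1}$ held fixed, both $g(x,y)$ and the square of the right-hand side share the \emph{same} leading coefficient $(x+1)^2$ in $y^2$. Hence the quadratic terms cancel upon subtraction, and what remains is a \emph{linear} equation in $a_k$. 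Collecting the coefficient of $a_k$ and the constant term yields
$$4\bigl(\nu^2 a_{k-1}+\nu^2-a_{k-1}\bigr)\,a_k=4\bigl(-\nu^4+2\nu^2 a_{k-1}+\nu^2-a_{k-1}\bigr),$$
and dividing through gives exactly the asserted formula.

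I expect no conceptual obstacle: the work is routine polynomial expansion, and the only genuinely important point is the cancellation of the $y^2$ terms, which is precisely what guarantees that $a_k$ is \emph{uniquely} determined by $a_{k-1}$ rather than being one of two roots of a quadratic. A minor separate check is that the denominator $\nu^2 a_{k-1}+\nu^2-a_{k-1}$ does not vanish: if it did, the linear equation above would force the numerator to vanish as well, and a short substitution ($\nu^2=a_{k-1}/(a_{k-1}+1)$) shows the numerator then equals $a_{k-1}^3/(a_{k-1}+1)^2$, which vanishes only at $a_{k-1}=0$; but there the denominator equals $\nu^2>0$, a contradiction. Thus the division is legitimate and the recursion is well-defined.
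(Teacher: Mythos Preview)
Your proposal is correct and follows exactly the paper's approach: invoke Theorems~\ref{Arveson distance formula}, \ref{Arveson seminorms must be equal theorem}, and \ref{function definition of Arveson seminorm thm} to obtain $f(a_{k-1},a_k)=\nu_{n-1,n}^2$, and then solve for $a_k$. The paper simply writes ``The desired formula can now be obtained by solving this equation for $a_k$'' without detailing the algebra; your observation that the $y^2$ terms cancel upon squaring (so the equation is linear in $a_k$) and your check that the denominator is nonzero supply exactly the details the paper omits.
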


	\begin{proof}
		Since the distance from $Q$ to $\mc{T}_n$ is minimal, Theorems~\ref{Arveson distance formula} and \ref{Arveson seminorms must be equal theorem} imply that $\|E_{k-1}^\perp QE_k\|=\nu_{n-1,n}$ for all $k\in\{1,2,\ldots,n\}$. Thus, with $f$ as in Theorem~\ref{function definition of Arveson seminorm thm}, we have that
		$$f(a_{k-1},a_k)=\|E_{k-1}^\perp QE_k\|^2=\nu_{n-1,n}^2.\smallskip$$ The desired formula can now be obtained by solving this equation for $a_k$.
	\end{proof}
	\smallskip
	
%%%	\begin{rmk}
%%%	
%%%			Hmmm... I think it might make more sense to express the $q_{11}$ term in the function above as $1-a_1$. In this way the function is expressed entirely in terms of the sequence $\{a_i\}_{i=0}^n$. The new function would given by 
%%%			$$a_k=\displaystyle{\frac{a_1^2+2a_1a_{k-1}-a_1-a_{k-1}}{a_1a_{k-1}+a_1-1}}\vspace{0.1cm}$$
%%%			and everything that follows should go through (with minor modifications).
%%%			I will change this later.\smallskip
%%%	
%%%	\end{rmk}
		
%		\begin{cor}
%	
%		Let $Q=(q_{ij})$ be a rank $n-1$ projection of minimal distance to the nilpotents such that $q_{ij}\leq 0$ whenever $i\neq j$. Let $k\in\{1,2,\ldots, n\}$ be fixed, let $Q_k$ denote the $k^{th}$ Arveson corner of $Q$, and define $B=Q_k^*Q_k$. Then
%		\begin{equation}
%		\|Q_k\|^2=\frac{\mathrm{Tr}(B)+\sqrt{2\mathrm{Tr}(B^2)-\mathrm{Tr(B)}^2}}{2}
%		\end{equation}
%		and the equation 
%		\begin{equation}\label{formula for q_{kk}}
%		q_{kk}=\frac{q_{11}^2-x_{k-1}^2+q_{11}x_{k-1}^2}{q_{11}+q_{11}x_{k-1}-x_{k-1}}
%		\end{equation}
%		holds for all $k\in\{1,2\ldots, n\}$.	
%	
%	\end{cor}

	The recursive formula for $a_k$ described in Proposition~\ref{recursive formula prop} will be the key to computing $\nu_{n-1,n}$. Our goal will be to use this formula and some basic properties of the sequence $\{a_i\}_{i=0}^n$ to determine a list of candidates for $\nu_{n-1,n}^2$. A careful analysis of these candidates will reveal that exactly one of them satisfies a certain necessary norm inequality from \cite{MacDonaldIdempotents}. This value must therefore be $\nu_{n-1,n}^2$.\\

	To simplify notation, let $t=\nu_{n-1,n}^2$ and define the function $h_t:[0,1]\rightarrow\rr$ by 
	\begin{equation}\label{ht equation}
	h_t(x)=\frac{-t^2+2tx+t-x}{tx+t-x}.\smallskip
	\end{equation}
	Proposition~\ref{recursive formula prop} states that for each $k\in\{1,2,\ldots, n\}$,
	$$
	a_k=\frac{-t^2+2ta_{k-1}+t-a_{k-1}}{ta_{k-1}+t-a_{k-1}}=h_t(a_{k-1}).\smallskip
	$$
	Since $h_t(0)=(t-t^2)/t=1-t=a_1$, this formula may be expressed as
	$a_k=h_t^{(k)}(0)$
	for all $k\in\{1,2,\ldots, n\}$. Upon taking into account the condition $a_n=\mathrm{Tr}(P)=1$, we are interested in identifying the values of $t\in\left[\frac{1}{4},1\right]$ that satisfy the equation
	$h_t^{(n)}(0)=1.$
	
	Notice that each expression $h_t^{(k)}(0)$ is a rational function of $t$. For each $k\geq 1$, let $p_{k-1}(t)$ and $q_{k-1}(t)$ denote polynomials in $t$ such that 
	$$
	h_t^{(k)}(0)=\frac{p_{k-1}(t)}{q_{k-1}(t)}.\smallskip
	$$
	It then follows that 
	\begin{align*}
	\frac{p_k(t)}{q_k(t)}&=h_t\left(h_t^{(k)}(0)\right)=h_t\left(\frac{p_{k-1}(t)}{q_{k-1}(t)}\right)=\frac{-t^2q_{k-1}(t)+2tp_{k-1}(t)+tq_{k-1}(t)-p_{k-1}(t)}{tp_{k-1}(t)+tq_{k-1}(t)-p_{k-1}(t)},\smallskip
	\end{align*}
	and hence we obtain the relations
	\begin{equation}\label{p_k(t) recurrence}
	p_k(t)=t(1-t)q_{k-1}(t)+(2t-1)p_{k-1}(t),
	\end{equation}
	\begin{equation}\label{q_k(t) recurrence}
	q_k(t)=tq_{k-1}(t)-(1-t)p_{k-1}(t).\smallskip
	\end{equation}
	We may replace $p_{k-1}(t)$ in (\ref{q_k(t) recurrence}) using equation~(\ref{p_k(t) recurrence}), thereby leading to a recurrence expressed only in the $q_k(t)$'s. Specifically, we have that

	$$\begin{array}{rcl}
	q_k(t)&=&tq_{k-1}(t)-(1-t)p_{k-1}(t)\vspace{0.2cm}\\
	&=&tq_{k-1}(t)-(1-t)\left[t(1-t)q_{k-2}(t)+(2t-1)p_{k-2}(t)\right]\vspace{0.2cm}\\
	&=&tq_{k-1}(t)-t(1-t)^2q_{k-2}(t)-(2t-1)\left[tq_{k-2}(t)-q_{k-1}(t)\right]\vspace{0.2cm}\\
	&=&(3t-1)q_{k-1}(t)-t^3q_{k-2}(t)
	\end{array}\smallskip$$

\noindent for all integers $k\geq 2$. 
%We may extend this recurrence relation to include $k=1$ by choosing a suitable expression for $q_{-1}(t)$. Indeed, 
Notice as well that since $$\begin{array}{ccc}\displaystyle{\frac{p_0(t)}{q_0(t)}=h_t(0)=1-t} & \text{and} & \displaystyle{\frac{p_1(t)}{q_1(t)}=h_t(h_t(0))=\frac{-3t^2+4t-1}{-t^2+3t-1},}\end{array}\smallskip$$
we have initial terms $q_0(t)=1$ and $q_1(t)=-t^2+3t-1$. 
%Thus, we may write $q_1(t)=(3t-1)q_0(t)-t^3q_{-1}(t)$ by defining $q_{-1}(t)= t^{-1}$.
	
	The requirement that $h_t^{(n)}(0)=1$ is equivalent to asking that $p_{n-1}(t)=q_{n-1}(t)$. Using the relations above, this equation can be restated as $tq_{n-2}(t)=p_{n-2}(t)$, or equivalently $q_{n-1}(t)=t^2q_{n-2}(t)$ by (\ref{q_k(t) recurrence}). Thus, we wish to determine the values of $t\in\left[\frac{1}{4},1\right]$ that satisfy
	$$q_{n-1}(t)=t^2q_{n-2}(t),$$
	where $$\begin{array}{cccc}
	q_0(t)=1, & q_1=-t^2+3t-1, & \text{and} & q_k(t)=(3t-1)q_{k-1}(t)-t^3q_{k-2}(t)\,\,\,\text{for}\,\,\,k\geq 2.\end{array}$$
	
	 A solution to this problem will require closed-form expressions for the polynomials $q_{n-1}(t)$ and $q_{n-2}(t)$, which may be obtained via diagonalization arguments akin to those in \cite{MacDonaldProjections}. 
%%	  In order to obtain such expressions, we will first rewrite the recurrence relation defining these polynomials in terms of matrix multiplication:
%%$$
%%\begin{bmatrix}
%%q_k(t)\\
%%q_{k-1}(t)
%%\end{bmatrix}=\begin{bmatrix}
%%3t-1 & -t^3\\
%%1 & 0
%%\end{bmatrix}\begin{bmatrix}q_{k-1}(t)\\ q_{k-2}(t)\end{bmatrix}=\begin{bmatrix}
%%3t-1 & -t^3\\
%%1 & 0
%%\end{bmatrix}^k\begin{bmatrix}q_{0}(t)\\ q_{-1}(t)\end{bmatrix}.\smallskip
%%$$
%%One may therefore obtain a description of $q_{n-1}(t)$ and $q_{n-2}(t)$ by diagonalizing the matrix 
%%$$A\coloneqq\begin{bmatrix}
%%3t-1 & -t^3\\
%%1 & 0
%%\end{bmatrix}.\smallskip$$
%% \noindent Routine computations show that the eigenvalues of $A$ are given by 
%%$$\lambda_1=\frac{3t-1+(1-t)\sqrt{1-4t}}{2}=\frac{3t-1+(1-t)iy}{2}$$ and $$\lambda_2=\frac{3t-1-(1-t)\sqrt{1-4t}}{2}=\frac{3t-1-(1-t)iy}{2},\smallskip$$ 
%%
%%\noindent where $y\coloneqq\sqrt{4t-1}$. Furthermore, the columns of the matrix 
%%$P\coloneqq\begin{bmatrix}\lambda_1 & \lambda_2\\ 1 & 1\end{bmatrix}$ form a basis of eigenvectors corresponding to $\lambda_1$ and $\lambda_2$, respectively. By computing 
%%$$P^{-1}=\frac{1}{(1-t)iy}\begin{bmatrix}\phantom{-}1 & -\lambda_2\\
%%-1 & \phantom{-}\lambda_1\end{bmatrix}\smallskip$$
Our analysis reveals that with $$\begin{array}{cccc}
y\coloneqq\sqrt{4t-1}, & \displaystyle{\lambda_1\coloneqq\frac{3t-1+(1-t)iy}{2}},  & \text{and} &  \displaystyle{\lambda_2\coloneqq\frac{3t-1-(1-t)iy}{2},}\end{array}$$
we have
$$\begin{array}{ccc}
\displaystyle{q_{n-1}(t)=\frac{t\left(\lambda_1^{n}-\lambda_2^{n}\right)-\lambda_2\lambda_1^{n}+\lambda_1\lambda_2^{n}}{t(1-t)iy}} & \text{and} & \displaystyle{q_{n-2}(t)=\frac{t\left(\lambda_1^{n-1}-\lambda_2^{n-1}\right)-\lambda_2\lambda_1^{n-1}+\lambda_1\lambda_2^{n-1}}{t(1-t)iy}}.
\end{array}$$
% \noindent and setting $D=\mathrm{diag}(\lambda_1,\lambda_2)$, we have that $A=PDP^{-1}$. Consequently, 
%$$\begin{bmatrix} q_{n-1}(t)\vspace{0.1cm}\\ q_{n-2}(t)\end{bmatrix}=PD^{n-1}P^{-1}\begin{bmatrix}q_{0}(t)\vspace{0.1cm} \\ q_{-1}(t)\end{bmatrix}=\frac{1}{t(1-t)iy}\begin{bmatrix}
%t\left(\lambda_1^{n}-\lambda_2^{n}\right)-\lambda_2\lambda_1^{n}+\lambda_1\lambda_2^{n}\vspace{0.1cm} \\
%t\left(\lambda_1^{n-1}-\lambda_2^{n-1}\right)-\lambda_2\lambda_1^{n-1}+\lambda_1\lambda_2^{n-1}
%\end{bmatrix}.\smallskip$$

 These expressions for $q_{n-1}(t)$ and $q_{n-2}(t)$ can now be used to identify the desired values of $t$. Indeed, when $q_{n-1}(t)=t^2q_{n-2}(t)$, we have that \vspace{0.1cm}
$$\begin{array}{crcl}
&t\left(\lambda_1^{n}-\lambda_2^{n}\right)-\lambda_2\lambda_1^{n}+\lambda_1\lambda_2^{n}&=&t^2\left(t\left(\lambda_1^{n-1}-\lambda_2^{n-1}\right)-\lambda_2\lambda_1^{n-1}+\lambda_1\lambda_2^{n-1}\right)\vspace{0.2cm} \\ 
\Longrightarrow & \lambda_1^n(t-\lambda_2)-\lambda_2^n(t-\lambda_1)&=&t^2\left(\lambda_1^{n-1}(t-\lambda_2)-\lambda_2^{n-1}(t-\lambda_1)\right)\vspace{0.2cm} \\ 
\Longrightarrow & \lambda_2^{n-1}(t^2-\lambda_2)(t-\lambda_1)&=&\lambda_1^{n-1}(t^2-\lambda_1)(t-\lambda_2),\vspace{0.2cm}
\end{array}$$
and therefore\smallskip
\begin{equation}\label{t and lambda product is 1}
\displaystyle{\left(\frac{\lambda_2}{\lambda_1}\right)^{n-1}\left(\frac{t^2-\lambda_2}{t^2-\lambda_1}\right)\left(\frac{t-\lambda_1}{t-\lambda_2}\right)}=1.\smallskip
\end{equation}
%
%\begin{equation}\label{Ending equation 1}
%(t-\lambda_2)\lambda_1^n-(t-\lambda_1)\lambda_2^n=t^2\left((t-\lambda_2)\lambda_1^{n-1}-(t-\lambda_1)\lambda_2^{n-1}\right).
%\end{equation}

\noindent This equation may be simplified using the following identities that relate the values of $t$, $\lambda_1$, and  $\lambda_2$. Verification of these identities is straightforward, and thus their proofs are left to the reader.\smallskip

%%%\begin{lem}
%%%
%%%	If $y=\sqrt{4t-1}$, $\lambda_1=(3t-1+(1-t)iy)/2$, and $\lambda_2=(3t-1-(1-t)iy)/2$, then 
%%%	
%%%	\begin{itemize}\label{identities lemma}
%%%	
%%%	\item[\upshape{(i)}] \begin{equation}\label{Reduction identity 1}
%%%t-\lambda_1=(1-t)\left(\frac{1-iy}{2}\right)\,\,\,\,\,\text{and}\,\,\,\,\,t-\lambda_2=(1-t)\left(\frac{1+iy}{2}\right);
%%%\end{equation}
%%%	
%%%	\item[\upshape{(ii)}]\begin{equation}
%%%	t^2-\lambda_1=(1-t)\left(\frac{1-2t-iy}{2}\right)\,\,\,\,\,\text{and}\,\,\,\,\,t^2-\lambda_2=(1-t)\left(\frac{1-2t+iy}{2}\right);
%%%	\end{equation}
%%%	
%%%	\item[\upshape{(iii)}]\begin{equation}
%%%	\frac{1+iy}{1-iy}=\frac{1-2t+iy}{2t}\,\,\,\,\,\text{and}\,\,\,\,\,\frac{1-iy}{1+iy}=\frac{1-2t-iy}{2t};
%%%	\end{equation}
%%%	
%%%	\item[\upshape{(iv)}]\begin{equation}
%%%	\frac{\lambda_2}{\lambda_1}=\left(\frac{1+iy}{1-iy}\right)^3\smallskip
%%%	\end{equation}
%%%	\end{itemize}
%%%
%%%\end{lem}

\begin{lem}\label{four identities lemma}

	If $y=\sqrt{4t-1}$, $\lambda_1=(3t-1+(1-t)iy)/2$, and $\lambda_2=(3t-1-(1-t)iy)/2$, then \vspace{0.1cm}
	
	\begin{itemize}	
	\item[(i)] $\begin{array}{rcl}
	\displaystyle{t-\lambda_1=(1-t)\left(\frac{1-iy}{2}\right)} & and & \displaystyle{t-\lambda_2=(1-t)\left(\frac{1+iy}{2}\right)};
	\end{array}$\\ \vspace{0.1cm}
	
	\item[(ii)] $\begin{array}{ccc}
	\displaystyle{t^2-\lambda_1=(1-t)\left(\frac{1-2t-iy}{2}\right)} & and & \displaystyle{t^2-\lambda_2=(1-t)\left(\frac{1-2t+iy}{2}\right)};
	\end{array}$\\ \vspace{0.1cm}
	
	\item[(iii)] $\begin{array}{ccc}
	\displaystyle{\frac{1+iy}{1-iy}=\frac{1-2t+iy}{2t}} & and & \displaystyle{\frac{1-iy}{1+iy}=\frac{1-2t-iy}{2t}};
	\end{array}$\\ \vspace{0.1cm}
	
	\item[(iv)] $\begin{array}{c}\displaystyle{\frac{\lambda_2}{\lambda_1}=\left(\frac{1+iy}{1-iy}\right)^3}\end{array}$.\smallskip
	
	\end{itemize}	
	
\end{lem}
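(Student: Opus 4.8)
The plan is to verify all four identities by direct substitution, treating the single algebraic relation $y^2 = 4t-1$ as the engine that drives every simplification. There is no structural subtlety here; the task is merely to organize the algebra so that the common factor $(1-t)$ and the substitution for $y^2$ do the work, which is why the statement invites the reader to supply the details.

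For (i) I would substitute the definition of $\lambda_1$ directly into $t-\lambda_1 = \frac{2t - (3t-1) - (1-t)iy}{2}$ and combine the real parts to get $\frac{(1-t) - (1-t)iy}{2} = (1-t)\frac{1-iy}{2}$; the companion identity for $t-\lambda_2$ follows by conjugation. For (ii) the only additional ingredient is the factorization $2t^2 - 3t + 1 = (1-t)(1-2t)$, which lets one pull $(1-t)$ out of $t^2 - \lambda_1 = \frac{(2t^2 - 3t + 1) - (1-t)iy}{2}$ to recover $(1-t)\frac{1-2t-iy}{2}$, with the second formula again obtained by conjugation. For (iii) I would rationalize the left-hand side by multiplying numerator and denominator of $\frac{1+iy}{1-iy}$ by $1+iy$, producing $\frac{(1-y^2) + 2iy}{1+y^2}$; here the relation $y^2 = 4t-1$ is decisive, since it gives $1+y^2 = 4t$ and $1-y^2 = 2-4t = 2(1-2t)$, so the quotient collapses to $\frac{(1-2t)+iy}{2t}$ as claimed.

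The one identity that demands a genuine observation rather than bookkeeping is (iv). Here I would expand $(1\pm iy)^3$ by the binomial theorem and repeatedly replace $y^2$ by $4t-1$. A short computation yields $1 - 3y^2 = 4(1-3t)$ and $3 - y^2 = 4(1-t)$, whence $(1+iy)^3 = 4\bigl[(1-3t) + (1-t)iy\bigr] = -8\lambda_2$ and, by conjugation, $(1-iy)^3 = -8\lambda_1$. Dividing these two identities then gives $\bigl(\frac{1+iy}{1-iy}\bigr)^3 = \frac{\lambda_2}{\lambda_1}$.

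The mild obstacle, and the only place where foresight is needed, is recognizing in advance that cubing $1 \pm iy$ reproduces the $\lambda_i$ up to the common scalar $-8$; once this is anticipated the verification is immediate, and indeed it is this observation that makes the simplification of equation~(\ref{t and lambda product is 1}) tractable in the next stage of the argument.
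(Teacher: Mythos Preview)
Your verification is correct and is exactly what the paper has in mind: the paper itself supplies no proof at all, writing only that ``Verification of these identities is straightforward, and thus their proofs are left to the reader.'' Your direct substitutions, driven throughout by $y^2=4t-1$ and the factorizations $2t-3t+1=(1-t)$ (wait, rather $2t^2-3t+1=(1-t)(1-2t)$) and $(1\pm iy)^3=-8\lambda_{2,1}$, are precisely the intended routine check.
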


%\begin{proof}
%
%	Verification of statements (i)-(iii) is straightforward, and thus their proofs are left to the reader. For (iv), an application of the Binomial Theorem demonstrates that
%	$$\begin{array}{rclcl}
%	(1+iy)^3 & = & 1+3iy-3y^2-iy^3\vspace{0.2cm}\\
%	& = & (1-3y^2)+iy(3-y^2)\vspace{0.2cm}\\
%	& = & 4(1-3t)+4(1-t)iy &=&-8\lambda_2.
%	\end{array}$$
%From this we deduce that $(1-iy)^3=\overline{(1+iy)^3}=-8\overline{\lambda_2}=-8\lambda_1,$
%and thus the result holds.
%\end{proof}
%\smallskip

One may apply the identities above to simplify equation~(\ref{t and lambda product is 1}) as follows:
$$\begin{array}{rclcl}
1&=&\displaystyle{\left(\frac{\lambda_2}{\lambda_1}\right)^{n-1}\left(\frac{t^2-\lambda_2}{t^2-\lambda_1}\right)\left(\frac{t-\lambda_1}{t-\lambda_2}\right)}\vspace{0.2cm} \\ 
&=&\displaystyle{\left(\frac{1+iy}{1-iy}\right)^{3(n-1)}\left(\frac{1-2t+iy}{1-2t-iy}\right)\left(\frac{1-iy}{1+iy}\right)}\vspace{0.2cm} \\
&=&\displaystyle{\left(\frac{1+iy}{1-iy}\right)^{3n-3}\left(\frac{1+iy}{1-iy}\right)^2\left(\frac{1-iy}{1+iy}\right)}\vspace{0.2cm} \\
&=&\displaystyle{\left(\frac{1+iy}{1-iy}\right)^{3n-2}}.\vspace{0.1cm}
\end{array}$$
%%%
%%%$$\begin{array}{rclcl}
%%%\begin{displaystyle}\frac{1+iy}{1-iy}\end{displaystyle}&=&\begin{displaystyle}\left(\frac{\lambda_2}{\lambda_1}\right)^{n-1}\left(\frac{\lambda_2-t^2}{\lambda_1-t^2}\right)\end{displaystyle}\\ 
%%%&=&\begin{displaystyle}\left(\frac{\lambda_2}{\lambda_1}\right)^{n-1}\left(\frac{1-2t+iy}{1-2t-iy}\right)\end{displaystyle}\\
%%%&=&\begin{displaystyle}\left(\frac{\lambda_2}{\lambda_1}\right)^{n-1}\left(\frac{1+iy}{1-iy}\right)^2\end{displaystyle}&=&\begin{displaystyle}\left(\frac{1+iy}{1-iy}\right)^{3n-1}\end{displaystyle}.
%%%\end{array}$$
We therefore conclude that   
$\displaystyle{\frac{1+iy}{1-iy}}=\rho_m^k,$
where $m\coloneqq 3n-2$, $\rho_m\coloneqq\displaystyle{e^{2\pi i/m}}$, and $k$ is an integer.

We are now in a position to determine the possible values of $t$. By solving for $y$ in the equation above, we obtain 
$$\begin{array}{rcccl}
y&=&\displaystyle{\frac{1}{i}\frac{\rho_m^k-1}{\rho_m^k+1}}&=&\displaystyle{\frac{1}{i}\frac{\rho_m^{k/2}\left(\rho_m^{k/2}-\rho_m^{-k/2}\right)}{\rho_m^{k/2}\left(\rho_m^{k/2}+\rho_m^{-k/2}\right)}}\vspace{0.2cm}\\
&&&=&\displaystyle{\frac{\rho_m^{k/2}-\rho_m^{-k/2}}{2i}\frac{2}{\rho_m^{k/2}+\rho_m^{-k/2}}}\vspace{0.2cm}\\
&&&=&\displaystyle{\frac{\sin\left(k\pi/m\right)}{\cos\left(k\pi/m\right)}\,=\,\tan\left(\frac{k\pi}{m}\right)}.\end{array}\smallskip$$
 Since $y=\sqrt{4t-1}$, we have 
$$
t=\frac{1}{4}\left(\tan^2\left(\frac{k\pi}{m}\right)+1\right)=\frac{1}{4}\sec^2\left(\frac{k\pi}{3n-2}\right)\,\,\,\text{for some}\,\,k\in\zz.
\smallskip$$
 \noindent That is, the distance $\nu_{n-1,n}$ from $Q$ to $\mc{T}_n$ must belong to the set $\left\{\frac{1}{2}\sec\left(\frac{k\pi}{3n-2}\right):k\in\zz\right\}.\smallskip$

It remains to determine which element of this set represents $\nu_{n-1,n}$. We will accomplish this task by appealing to the following result of MacDonald concerning a lower bound on the distance from a projection to a nilpotent.\smallskip

\begin{prop}\textup{\cite[Lemma 3.3]{MacDonaldIdempotents}}\label{distance estimate prop}
If $P\in\mm_n(\cc)$ is a projection of rank $r$ and $N\in\mm_n(\cc)$ is nilpotent, then 
$$\|P-N\|\geq\sqrt{\frac{r}{2n}\left(1+\frac{r}{n}\right)}.$$
\end{prop}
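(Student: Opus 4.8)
The plan is to sidestep the operator norm, for which the distance to the nilpotents is genuinely hard, and instead route everything through the Hilbert--Schmidt (Frobenius) norm $\|X\|_2^2=\tr(X^*X)$, for which the distance from $P$ to the nilpotents can be computed exactly. Two elementary observations drive the argument. First, for any $M\in\mm_n(\cc)$ one has $\|M\|^2\geq\frac1n\|M\|_2^2$, since $\|M\|_2^2=\sum_i\sigma_i(M)^2\leq n\,\sigma_{\max}(M)^2=n\|M\|^2$. Hence it suffices to prove the cleaner bound $\|P-N\|_2^2\geq\frac{r(n+r)}{2n}$, after which dividing by $n$ and taking square roots delivers the claim. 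Second, a matrix is nilpotent exactly when it is unitarily equivalent to a strictly upper triangular matrix (Schur); as the Hilbert--Schmidt norm is unitarily invariant, I would write $N=UTU^*$ with $T$ strictly upper triangular and reduce to estimating $\|U^*PU-T\|_2$.

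First I would fix the unitary and minimize over $T$. The strictly upper triangular matrices form a linear subspace of $\mm_n(\cc)$, so the minimum of $\|P'-T\|_2$ over such $T$ (with $P'\coloneqq U^*PU$) is attained at the orthogonal projection of $P'$ onto that subspace, namely its strictly upper triangular part. The residual is the diagonal-plus-strictly-lower part of $P'$, so
$$\min_{T}\|P'-T\|_2^2=\sum_{i}(P')_{ii}^2+\sum_{i>j}|(P')_{ij}|^2.$$
The key simplification is that $P'$ is a projection: $\|P'\|_2^2=\tr(P'^2)=\tr(P')=r$, and since $P'$ is Hermitian the strict upper and strict lower parts carry equal Hilbert--Schmidt norm, giving $\sum_{i>j}|(P')_{ij}|^2=\frac12\bigl(r-\sum_i(P')_{ii}^2\bigr)$. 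Substituting collapses the right-hand side to the clean expression $\frac12 r+\frac12\sum_i(P')_{ii}^2$.

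It then remains only to control the diagonal of $P'$. Writing $d_i=(P')_{ii}$, one has $\sum_i d_i=\tr(U^*PU)=r$, so Cauchy--Schwarz gives $\sum_i d_i^2\geq(\sum_i d_i)^2/n=r^2/n$. Combining the last displays yields
$$\|P-N\|_2^2=\|P'-T\|_2^2\geq\frac{r}{2}+\frac12\cdot\frac{r^2}{n}=\frac{r(n+r)}{2n},$$
whence $\|P-N\|^2\geq\frac1n\|P-N\|_2^2\geq\frac{r(n+r)}{2n^2}=\frac{r}{2n}\bigl(1+\frac rn\bigr)$, as required.

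The single load-bearing step, and the place where nilpotency is indispensable, is the passage to strictly upper triangular form: it is precisely the vanishing of all eigenvalues that forces the diagonal of the Schur form of $N$ to be zero, so that the diagonal of $P'-T$ equals that of $P'$ and contributes the irreducible term $\sum_i d_i^2\geq r^2/n$. I would want to verify that mere tracelessness is not enough, and indeed $N=P-\frac rn I$ satisfies $\tr N=0$ with $\|P-N\|=r/n$, which is strictly below the asserted bound when $r<n$; this confirms that the argument must exploit full nilpotency rather than just $\tr N=0$. The relaxation $\|M\|^2\geq\frac1n\|M\|_2^2$ is lossy and will not recover the sharp operator-norm distance, but it is exactly strong enough to produce this lower bound, which is all the intended application requires.
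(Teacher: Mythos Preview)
Your argument is correct. The chain
\[
\|P-N\|^2 \;\geq\; \tfrac{1}{n}\|P-N\|_2^2 \;=\; \tfrac{1}{n}\|P'-T\|_2^2 \;\geq\; \tfrac{1}{n}\min_{T'\ \text{strictly u.t.}}\|P'-T'\|_2^2 \;=\; \tfrac{1}{n}\Bigl(\tfrac{r}{2}+\tfrac12\sum_i d_i^2\Bigr) \;\geq\; \tfrac{r(n+r)}{2n^2}
\]
is valid at every step: the Schur form of a nilpotent has zero diagonal, the Frobenius-orthogonal projection onto the strictly upper triangular matrices kills exactly the strict upper part, the Hermitian symmetry of $P'$ splits the off-diagonal mass evenly, and Cauchy--Schwarz on $\sum d_i = r$ gives the final bound. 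One stylistic point: the phrase ``fix the unitary and minimize over $T$'' might momentarily suggest $T$ is free when in fact it is determined by $N$ and $U$; what you are really doing is bounding $\|P'-T\|_2^2$ below by the minimum over \emph{all} strictly upper triangular matrices, which is of course legitimate.

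As for comparison with the paper: the paper does not supply a proof of this proposition at all --- it is quoted verbatim as \cite[Lemma~3.3]{MacDonaldIdempotents} and used as a black box to pin down which candidate value of $t$ equals $\nu_{n-1,n}^2$. Your Hilbert--Schmidt route is self-contained and short, so it could serve as an inline replacement for the citation if one wished to make the paper independent of that reference.
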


In the analysis that follows, we will demonstrate that the only value in $\left\{\frac{1}{2}\sec\left(\frac{k\pi}{3n-2}\right):k\in\zz\right\}$ that respects the lower bound of Proposition~\ref{distance estimate prop} for projections of rank $r=n-1$ occurs when $k=n-1$. We begin with the following lemma, which proves that MacDonald's lower bound is indeed satisfied for this choice of $k$.\smallskip

\begin{lem}\label{Inequality works when k=1}
For every integer $n\geq 3$, 
$$\frac{n-1}{2n}\left(1+\frac{n-1}{n}\right)\leq \frac{1}{4}\sec^2\left(\frac{(n-1)\pi}{3n-2}\right)\leq 1$$
\end{lem}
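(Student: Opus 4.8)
The plan is to treat the two inequalities separately, since they are genuinely different in character. The left-hand inequality
$$\frac{n-1}{2n}\left(1+\frac{n-1}{n}\right)\leq \frac{1}{4}\sec^2\left(\frac{(n-1)\pi}{3n-2}\right)$$
simplifies, after multiplying out the left side, to the claim that
$$\frac{(n-1)(2n-1)}{2n^2}\leq \frac{1}{4}\sec^2\left(\frac{(n-1)\pi}{3n-2}\right),$$
or equivalently, taking reciprocals and using $\cos^2=1/\sec^2$,
$$\cos^2\left(\frac{(n-1)\pi}{3n-2}\right)\leq \frac{n^2}{2(n-1)(2n-1)}.$$
The key observation is that as $n\to\infty$, the angle $\frac{(n-1)\pi}{3n-2}\to \frac{\pi}{3}$, so $\cos^2$ tends to $\frac14$, while the right-hand side tends to $\frac14$ as well. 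This makes the inequality asymptotically tight, which is exactly what one expects from the fact that $k=n-1$ is the \emph{correct} value. The right-hand inequality $\frac{1}{4}\sec^2\left(\frac{(n-1)\pi}{3n-2}\right)\leq 1$ is the easier of the two: it reduces to $\cos^2\left(\frac{(n-1)\pi}{3n-2}\right)\geq \frac14$, i.e.\ $\left|\cos\left(\frac{(n-1)\pi}{3n-2}\right)\right|\geq \frac12$, and since the angle lies in $\left(\frac{\pi}{4},\frac{\pi}{3}\right]$ for $n\geq 3$ (one checks $\frac{(n-1)\pi}{3n-2}$ is increasing in $n$ with supremum $\frac{\pi}{3}$ and value $\frac{2\pi}{7}>\frac{\pi}{4}$ at $n=3$), the cosine is at least $\cos\frac{\pi}{3}=\frac12$. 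This handles the upper bound cleanly.

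For the lower bound I would introduce $\theta_n\coloneqq \frac{(n-1)\pi}{3n-2}$ and reformulate the target inequality as $\cos^2\theta_n\leq \frac{n^2}{2(n-1)(2n-1)}$. The most robust route is probably a direct calculus argument: set $x=1/n\in(0,\tfrac13]$, so that $\theta=\frac{(1-x)\pi}{3-2x}$ and the right-hand side becomes $\frac{1}{2(1-x)(2-x)}$, and show the inequality on this interval by monotonicity or convexity estimates. Alternatively, since $\theta_n$ increases to $\frac{\pi}{3}$ from below, one can bound $\cos\theta_n$ from above by a linear (first-order Taylor) approximation of $\cos$ at $\frac{\pi}{3}$ — namely $\cos\theta_n\leq \cos\frac{\pi}{3}+(\theta_n-\frac{\pi}{3})\cdot(-\sin\frac{\pi}{3})$ fails in the wrong direction, so more carefully one uses that $\frac{\pi}{3}-\theta_n=\frac{\pi}{3(3n-2)}$ and expands $\cos\theta_n=\cos\left(\frac{\pi}{3}-\frac{\pi}{3(3n-2)}\right)$ via the angle-subtraction formula, then squares and compares term by term against $\frac{n^2}{2(n-1)(2n-1)}$.

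I expect the \textbf{main obstacle} to be the lower-bound inequality, precisely because it is asymptotically tight: both sides converge to $\frac14$, so any crude estimate (such as replacing $\theta_n$ by $\frac{\pi}{3}$) collapses the inequality to an equality and loses the strict gap one needs for finite $n$. The delicacy is therefore in controlling the $O(1/n)$ correction terms on both sides simultaneously and showing the right-hand side's correction dominates. I would guard against sign errors by first verifying the inequality numerically at $n=3$ and $n=4$ (where it should hold with a small but positive margin), confirming the direction, and only then committing to the asymptotic expansion. A clean way to finish, avoiding heavy trigonometric manipulation, may be to clear denominators and reduce everything to a polynomial inequality in $n$ together with a single controlled bound of the form $\cos\theta_n\leq \frac{n}{\sqrt{2(n-1)(2n-1)}}$, which one then verifies is implied by a standard estimate such as $\cos\theta\leq 1-\frac{2}{\pi^2}\left(\frac{\pi}{3}-\theta\right)$-type concavity bounds for $\cos$ near $\frac{\pi}{3}$.
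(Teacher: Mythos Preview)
Your treatment of the upper bound $\frac{1}{4}\sec^2\theta_n\leq 1$ is correct and essentially identical to the paper's: the angle $\theta_n=\frac{(n-1)\pi}{3n-2}$ increases to $\pi/3$, so $\cos\theta_n\geq\cos(\pi/3)=\frac12$.

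For the lower bound, however, you have only a sketch, not a proof. You correctly diagnose the difficulty---both sides tend to $\frac14$, so the inequality is tight to first order and one must control the $O(1/n)$ corrections---but none of your proposed routes is carried through. One remark: your dismissal of the tangent-line bound is mistaken. Since $\cos$ is concave on $[0,\pi/2]$, the inequality $\cos\theta_n\leq \frac12+\frac{\sqrt{3}}{2}\bigl(\frac{\pi}{3}-\theta_n\bigr)$ \emph{is} in the right direction (an upper bound on $\cos\theta_n$), and since $\frac{\pi}{3}-\theta_n=\frac{\pi}{3(3n-2)}$ this gives a $1/n$ correction of size $\frac{\sqrt{3}\pi}{18}\approx 0.302$, which is comfortably below the $\frac{3}{8}$ correction on the right-hand side. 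So this approach would in fact work if pushed through, contrary to what you say.

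The paper's argument for the hard inequality is different from any of your suggestions and worth knowing. It first weakens the target by observing $(2n-\tfrac{3}{2})^2\geq 2(n-1)(2n-1)$, so it suffices to prove $\cos^2\theta_n\leq \frac{n^2}{(2n-3/2)^2}$, i.e.\ $\cos\theta_n\leq \frac{2n}{4n-3}$. This replacement by a perfect square is the key manoeuvre: it converts a $\cos^2$ bound into a linear $\cos$ bound. The paper then sets $f(n)=\frac{2n}{4n-3}-\cos\theta_n$, checks $f(3)>0$ and $\lim_{n\to\infty}f(n)=0$, and shows $f'(n)<0$ by bounding $\sin\theta_n\in[\sqrt{2}/2,\sqrt{3}/2]$ and reducing to a concave quadratic in $n$. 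Thus $f$ is positive and decreasing to zero, giving the inequality for all $n\geq 3$. This monotonicity-plus-limit argument avoids any delicate asymptotic balancing.
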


\begin{proof}

	Define $\alpha_n\coloneqq (3n-2)/(n-1)$. By considering reciprocals, this problem is equivalent to that of establishing the inequalities
	$$
	\begin{array}{rl}\displaystyle{\frac{1}{4}\leq\cos^2\left(\frac{\pi}{\alpha_n}\right)\leq\frac{n^2}{2(n-1)(2n-1)}} & \text{for all}\,\,n\in\zz, n\geq 3.\end{array}$$
	In the computations that follow, it will be helpful to view $n$ as a continuous variable on $[3,\infty)$.
	
	To establish the inequality $1/4\leq\cos^2\left(\pi/\alpha_n\right),$ simply note that $\pi/\alpha_n$ is an increasing function of $n$ tending to $\pi/3$, $\cos(x)$ is decreasing on $[0,\pi/3]$, and $\cos(\pi/3)=1/2$. The second inequality will require a bit more work. Since $(2n-\frac{3}{2})^2\geq 2(n-1)(2n-1)$ for all $n$, it suffices to prove that
	$$
	\begin{array}{rl}\displaystyle{\cos^2\left(\frac{\pi}{\alpha_n}\right)\leq\frac{n^2}{\left(2n-\frac{3}{2}\right)^2}} &\text{for}\,\,n\in[3,\infty).
	\end{array}$$ 
	
	\noindent Note that this inequality holds if and only if the function
	$$\begin{array}{c}
	\displaystyle{f(n)\coloneqq\frac{2n}{4n-3}-\cos\left(\frac{\pi}{\alpha_n}\right)}
	\end{array}$$ 
	is non-negative on $[3,\infty)$.
	
	We will prove that $f^\prime(n)<0$ for all $n\in[3,\infty)$, so that $f$ is decreasing on this interval. Since $$\begin{array}{ccc}\displaystyle{\lim_{n\rightarrow\infty}f(n)=0} & \text{and} & \displaystyle{f(3)=\frac{2}{3}-\cos\left(\frac{2\pi}{7}\right)\approx 0.043>0,}\end{array}\smallskip$$
	 \noindent this will demonstrate that $f(n)\geq 0$ for all $n\geq 3$. To this end, we compute
	$$f^\prime(n)=\displaystyle{\frac{16\pi\sin\left(\frac{\pi}{\alpha_n}\right)n^2-24\pi\sin\left(\frac{\pi}{\alpha_n}\right)n+9\pi\sin\left(\frac{\pi}{\alpha_n}\right)-54n^2+72n-24}{(4n-3)^2(3n-2)^2}.}\smallskip$$
		Of course $(4n-3)^2(3n-2)^2\geq 0$, so the sign of $f^\prime(n)$ depends only on the sign of
	$$g(n)\coloneqq \displaystyle{16\pi\sin\left(\frac{\pi}{\alpha_n}\right)n^2-24\pi\sin\left(\frac{\pi}{\alpha_n}\right)n+9\pi\sin\left(\frac{\pi}{\alpha_n}\right)-54n^2+72n-24}.\smallskip$$
	 But since $\pi/\alpha_n\in[\pi/4,\pi/3]$ for $n\geq 3$, we have that $\sin\left(\pi/\alpha_n\right)\in[\sqrt{2}/2,\sqrt{3}/{2}]$ for all such $n$, and hence
	\begin{align*}
	g(n)&\leq 16\pi\left(\frac{\sqrt{3}}{2}\right)n^2-24\pi\left(\frac{\sqrt{2}}{2}\right)n+9\pi\left(\frac{\sqrt{3}}{2}\right)-54n^2+72n-24\\
	&=\left(8\sqrt{3}\pi-54\right)n^2-\left(12\sqrt{2}-72\right)n+\left(\frac{9\sqrt{3}}{2}-24\right).
	\end{align*}
	This upper bound for $g$ is a concave quadratic whose larger root occurs at $n\approx 1.8105.$ It follows that $g$ is negative on $[3,\infty)$, and therefore so too is $f^\prime$.
\end{proof}
\smallskip

\begin{lem}\label{Calculus lem}

	For any integer $n\geq 3$, the set 
	$$\left\{\frac{1}{4}\sec^2\left(\frac{k\pi}{3n-2}\right):k\in\zz\right\}\smallskip$$
contains exactly one value in $\displaystyle{\left[\frac{n-1}{2n}\left(1+\frac{n-1}{n}\right),1\right]}$, and it occurs when $k=n-1$.\\
\end{lem}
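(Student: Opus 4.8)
The plan is to treat the set $S=\{\tfrac14\sec^2(k\pi/m):k\in\zz\}$, with $m\coloneqq 3n-2$, as the image of a monotone function. Since $v(k)\coloneqq\tfrac14\sec^2(k\pi/m)$ depends on $k$ only through $\cos^2(k\pi/m)$, and $\cos^2(k\pi/m)$ is even in $k$, has period $m$ in $k$, and is invariant under $k\mapsto m-k$, every value $v(k)$ equals $v(k_0)$ for a unique $k_0\in\{0,1,\dots,\lfloor m/2\rfloor\}$; here $\cos^2(k_0\pi/m)$ is strictly decreasing, so $v$ is strictly increasing on this range (with the convention $v=+\infty$ when $m$ is even and $k_0=m/2$), and distinct $k_0$ give distinct values. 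Consequently, writing $L\coloneqq\frac{(n-1)(2n-1)}{2n^2}=\frac{n-1}{2n}(1+\frac{n-1}{n})$ and $I\coloneqq[L,1]$, the number of elements of $S$ lying in $I$ equals the number of canonical indices $k_0$ with $v(k_0)\in I$. For $n\ge 3$ one has $n/(3n-2)<\tfrac12$, so $k_0=0,1,\dots,n$ all satisfy $0\le k_0\pi/m<\pi/2$ and lie in the strictly increasing, finite regime.

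By Lemma~\ref{Inequality works when k=1} we already know $v(n-1)\in[L,1]$, and $(n-1)/(3n-2)<\tfrac12$ places $k_0=n-1$ in the increasing regime. By strict monotonicity of $v$ it therefore suffices to show that the two neighbouring indices leave $I$, namely $v(n)>1$ and $v(n-2)<L$: once these hold, every $k_0\ge n$ gives $v(k_0)\ge v(n)>1$ and every $k_0\le n-2$ gives $v(k_0)\le v(n-2)<L$, so $k_0=n-1$ is the unique canonical index with $v(k_0)\in I$, proving both the count and the stated location.

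The inequality $v(n)>1$ is elementary: with $\theta_n\coloneqq n\pi/(3n-2)\in(0,\pi/2)$ we have $v(n)>1\iff\cos\theta_n<\tfrac12\iff\theta_n>\pi/3\iff 3n>3n-2$, which always holds. The inequality $v(n-2)<L$ is the crux. Writing $\theta_{n-2}\coloneqq(n-2)\pi/(3n-2)\in(0,\pi/2)$, it is equivalent to $\cos^2\theta_{n-2}>\frac{n^2}{2(n-1)(2n-1)}$, i.e. to $\cos\theta_{n-2}>\frac{n}{\sqrt{2(n-1)(2n-1)}}$. The main obstacle is that as $n\to\infty$ one has $\theta_{n-2}\to\pi/3$ and the right-hand side $\to\tfrac12$, so both sides converge to the common value $\tfrac12$ and no crude estimate separates them.

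To manage this I would first remove the square root using $4(n-1)^2\le 2(n-1)(2n-1)$ (equivalently $2n-2\le 2n-1$), which gives $\frac{n}{\sqrt{2(n-1)(2n-1)}}\le\frac{n}{2(n-1)}$; it then suffices to prove the cleaner inequality $\cos\theta_{n-2}>\frac{n}{2(n-1)}$. Setting $\theta_{n-2}=\tfrac{\pi}{3}-\epsilon_n$ with $\epsilon_n=\frac{4\pi}{3(3n-2)}>0$ and expanding $\cos(\tfrac\pi3-\epsilon_n)=\tfrac12\cos\epsilon_n+\tfrac{\sqrt3}{2}\sin\epsilon_n$, the leading term $\tfrac{\sqrt3}{2}\epsilon_n$ is of order $\tfrac{1.209}{n}$, comfortably exceeding $\frac{n}{2(n-1)}-\tfrac12=\frac{1}{2(n-1)}$, which is of order $\tfrac{0.5}{n}$. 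I would make this rigorous either by applying $\cos\epsilon_n\ge 1-\epsilon_n^2/2$ and $\sin\epsilon_n\ge\epsilon_n-\epsilon_n^3/6$ to reduce the claim to a rational inequality in $n$ valid for all $n\ge 3$ (cleared of denominators, with the leading term dominating plus a base case at $n=3$), or, paralleling the method of Lemma~\ref{Inequality works when k=1}, by treating $n$ as continuous and analysing the sign of the derivative of $F(n)\coloneqq\cos\theta_{n-2}-\frac{n}{2(n-1)}$ together with $\lim_{n\to\infty}F(n)=0$ and $F(3)=\cos(\pi/7)-\tfrac34>0$. This delicate comparison near the shared limit $\tfrac12$ is the only substantive difficulty; the remaining steps are routine.
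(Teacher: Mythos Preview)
Your proposal is correct and follows the same overall architecture as the paper: reduce by periodicity and the symmetry $k\mapsto m-k$ to a canonical range where the values are monotone, invoke Lemma~\ref{Inequality works when k=1} to place $k=n-1$ inside the interval, and then argue that the two adjacent indices $k=n$ and $k=n-2$ fall outside. The paper phrases the last step a bit differently: rather than comparing $v(n)$ and $v(n-2)$ directly against the endpoints of $I$, it uses the identity $\cos^2x-\cos^2y=-\sin(x-y)\sin(x+y)$ to show that the gaps $|\cos^2(\theta_{n-1\pm 1})-\cos^2(\theta_{n-1})|$ exceed the interval width $\beta(n)=\tfrac{n^2}{2(n-1)(2n-1)}-\tfrac14$, bounding each sine product below via a Taylor estimate $\sin(\pi/(3n-2))\ge 13/(16n)$. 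Your treatment of $k=n$ via the elementary equivalence $\theta_n>\pi/3\iff 3n>3n-2$ is cleaner than the paper's route through $\beta(n)$. For $k=n-2$ your reduction $\tfrac{n}{\sqrt{2(n-1)(2n-1)}}\le \tfrac{n}{2(n-1)}$ followed by expanding $\cos(\tfrac{\pi}{3}-\epsilon_n)$ is a legitimate alternative to the paper's sine-product bound; both hinge on a first-order Taylor comparison of terms tending to a common limit, and either of the two finishing strategies you outline (rational inequality or the $F'(n)$-sign argument with $F(3)>0$ and $\lim F=0$) will close the argument.
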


\begin{proof}
Fix an integer $n\geq 3$. We wish to prove that 
$\mc{A}\coloneqq \left\{\cos^2\left(\frac{k\pi}{3n-2}\right):k\in\zz\right\}$ contains exactly one value in the interval
$\left[\frac{1}{4},\frac{n^2}{2(n-1)(2n-1)}\right].$
Since Lemma~\ref{Inequality works when k=1} demonstrates that this is the case when $k=n-1$, it suffices to show that no other values in $\mathcal{A}$ are within distance
\begin{align*}
\beta(n)\coloneqq\frac{n^2}{2(n-1)(2n-1)}-\frac{1}{4}
\end{align*} 
of $\cos^2((n-1)\pi/(3n-2))$.
%
%Note that $\delta$ is a decreasing function of $n$ on $[3,\infty)$, and hence $\delta(n)\leq\alpha(3)=1/5$.

Note, however, that not all values of $k\in\mathbb{Z}$ need to be considered. In particular, since the function
%Indeed, one can see immediately that $k=0$ will yield a value of $1$, which is certainly outside the target range:
%\begin{align*}
%\left|\cos^2(\pi/\alpha)-1\right|&\geq 1-\cos^2(\pi/3)=\frac{1}{2}\geq\frac{1}{2(n-1)}-\frac{1}{4(2n-1)}
%\end{align*}
%\begin{align*}
%\cos^2\left(\frac{(k+(3n-2))\pi}{3n-2}\right)&=\cos^2\left(\frac{k\pi}{3n-2}\right),
%\end{align*}
 $k\mapsto\cos^2(k\pi/(3n-2))$ is periodic, it suffices to check only its values at the integers $k\in\{0,1,\ldots, 3n-2\}$. Additionally, since
$$\cos^2\left(\frac{((3n-2)-k)\pi}{3n-2}\right)=\cos^2\left(\frac{k\pi}{3n-2}\right)\,\,\,\text{for all}\,\,k,\smallskip$$ we may restrict our attention to $k\in\{0,1,2,\ldots, \floor*{(3n-2/2)}\}$. 

Although we are solely concerned with the integer values of $k$ described above, it will be useful to view $k$ as a continuous real variable. With this in mind, define the function $f_n:[0,(3n-2)/2]\rightarrow\rr$ by   
$$f_n(k)\coloneqq\sin\left(\frac{(n-k-1)\pi}{3n-2}\right)\sin\left(\frac{(n+k-1)\pi}{3n-2}\right).\smallskip$$
It follows from the identity $\cos^2(x)-\cos^2(y)=-\sin(x-y)\sin(x+y)$ that 
$$\begin{array}{ccc}\displaystyle{\left|\cos^2\left(\frac{k\pi}{3n-2}\right)-\cos^2\left(\frac{(n-1)\pi}{3n-2}\right)\right|<\beta(n)}&\Longleftrightarrow&\displaystyle{|f_n(k)|<\beta(n).}\end{array}\smallskip$$

%%%f_n^\prime(k)=\frac{\pi}{3n-2}\left(\sin\left(\frac{(n-k-1)\pi}{3n-2}\right)\cos\left(\frac{(n+k-1)\pi}{3n-2}\right)-\sin\left(\frac{(n+k-1)\pi}{3n-2}\right)\cos\left(\frac{(n-k-1)\pi}{3n-2}\right)\right)\\
\noindent Notice, however, that 
$$f_n^\prime(k)=\left(\frac{-\pi}{3n-2}\right)\sin\left(\frac{2k\pi}{3n-2}\right),\smallskip$$
so $f_n^\prime(k)<0$ on $[0,(3n-2)/2]$, and hence $f_n$ is decreasing on its domain. Since $f_n(n-1)=0$, it therefore suffices to prove that $f_n(n-2)> \beta(n)$ and $-f_n(n)> \beta(n).$ We will demonstrate that these inequalities hold via application of Taylor's Theorem. 

Consider the approximation of $\sin(x)$ by $x-x^3/6$, its third degree MacLauren polynomial. On $[0,\pi/6]$, the error in this approximation is at most
$$E(x)=\frac{\sin(\pi/6)}{4!}|x|^4=\frac{x^4}{48}.\smallskip$$ 
Thus, since $1/n\leq \pi/(3n-2)\leq \pi/6$, we have
$$\sin\left(\frac{\pi}{3n-2}\right)\geq\sin\left(\frac{1}{n}\right)\geq \left(\frac{1}{n}-\frac{1}{6n^3}-E\left(\frac{1}{n}\right)\right)\geq\left(\frac{1}{n}-\frac{1}{6n}-\frac{1}{48n}\right)=\frac{13}{16n}.\smallskip$$
It is routine to verify that $\sin\left((2x-1)\pi/(3x-2)\right)$ is an increasing function of $x$ on $[3,\infty)$. Consequently, this function is bounded below by $\sin\left(5\pi/7\right)$, its value at $x=3$. We deduce that 
$$-f_n(n)=\sin\left(\frac{\pi}{3n-2}\right)\sin\left(\frac{(2n-1)\pi}{3n-2}\right)\geq \frac{13}{16n} \sin\left(\frac{5\pi}{7}\right)\geq\frac{13}{16n}\cdot\frac{3}{4}=\frac{39}{64n}.\smallskip$$
Lastly, one may show directly that $$\frac{39}{64n}>\beta(n)\,\,\,\text{whenever}\,\,\,n> \frac{101+\sqrt{5521}}{60}\approx 2.9217,\smallskip$$ and hence $-f_n(n)> \beta(n)$ for our fixed integer $n\geq 3$. 

A similar analysis may now be used to prove that $f_n(n-2)> \beta(n)$. Indeed, it is straightforward to verify that $\sin\left((2n-3)\pi/(3n-2)\right)$ is bounded below by $\sin\left(2\pi/3\right)$, and therefore
$$
\begin{array}{rcl}
f_n(n-2)&=&\displaystyle{\sin\left(\frac{\pi}{3n-2}\right)\sin\left(\frac{(2n-3)\pi}{3n-2}\right)}\vspace{0.2cm}\\
&\geq&\displaystyle{\frac{13}{16n} \sin\left(\frac{2\pi}{3}\right)=\frac{13}{16n}\cdot\frac{\sqrt{3}}{2}\geq\frac{13}{16n}\cdot\frac{3}{4}=\frac{39}{64n}.}
\end{array}\smallskip$$
It now follows from the arguments of the previous case that $f_n(n-2)> \beta(n)$.
\end{proof}
\smallskip

With the above analysis complete, we may now present the main result of this paper: the distance from a projection in $\mm_n(\cc)$ of rank $n-1$ to the set $\mc{N}(\cc^n)$ is $$\nu_{n-1,n}=\frac{1}{2}\sec\left(\frac{(n-1)\pi}{3n-2}\right).\smallskip$$ Interestingly, this expression can be rewritten to bear an even stronger resemblance to MacDonald's formula in the rank-one case. \smallskip

\begin{thm}\label{main theorem}
	For every integer $n\geq 2$, the distance from the set of projections in $\mm_n(\cc)$ of rank $n-1$ to $\mc{N}(\cc^n)$ is
	$$\nu_{n-1,n}=\frac{1}{2}\sec\left(\frac{\pi}{\frac{n}{n-1}+2}\right).$$
\end{thm}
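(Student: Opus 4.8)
The plan is to assemble the results already established in \S2 and \S3, combine them with MacDonald's lower bound from Proposition~\ref{distance estimate prop}, and finish with a purely cosmetic algebraic rewriting. First I would dispose of the base case $n=2$ separately, since much of the preceding machinery assumes $n\geq 3$. When $n=2$ a projection of rank $n-1$ is simply a rank-one projection, so Theorem~\ref{MacDonald distance formula} applies verbatim and yields $\nu_{1,2}=\tfrac12\sec(\tfrac{\pi}{4})=\tfrac{1}{\sqrt 2}$; this matches the claimed expression because $\tfrac{n}{n-1}+2=4$ when $n=2$. For the rest of the argument I assume $n\geq 3$.

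Next I would recall what the earlier sections have already bought us. By reduction (iii) of the introduction and the compactness argument (ii), the value $\nu_{n-1,n}$ is attained by some rank $n-1$ projection $Q$ of minimal distance to $\mc{T}_n$. Theorem~\ref{Arveson seminorms must be equal theorem} together with the Arveson Distance Formula (Theorem~\ref{Arveson distance formula}) then forces every seminorm $\|E_{i-1}^\perp Q E_i\|$ to equal $\nu_{n-1,n}$, and the recursion of Proposition~\ref{recursive formula prop}, fed through the diagonalization argument that simplifies equation~(\ref{t and lambda product is 1}) via Lemma~\ref{four identities lemma}, shows that
$$\nu_{n-1,n}\in\left\{\tfrac12\sec\left(\tfrac{k\pi}{3n-2}\right):k\in\zz\right\}.$$
Thus the only remaining task is to identify the correct integer $k$.

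The key point is that $t\coloneqq\nu_{n-1,n}^2$ is squeezed into a narrow window. For the lower bound I would invoke Proposition~\ref{distance estimate prop} with $r=n-1$, which gives $t\geq\tfrac{n-1}{2n}(1+\tfrac{n-1}{n})$. For the upper bound, since $0\in\mc{N}(\cc^n)$ and $\|Q\|=1$, we have $t=\nu_{n-1,n}^2\leq\|Q-0\|^2=1$ (one could equally cite the bound $f\leq 1$ from Lemma~\ref{f increasing in x, decreasing in y lemma}). Hence $t\in\big[\tfrac{n-1}{2n}(1+\tfrac{n-1}{n}),\,1\big]$, while $t$ lies in the candidate set $\{\tfrac14\sec^2(\tfrac{k\pi}{3n-2}):k\in\zz\}$. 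But Lemma~\ref{Calculus lem} states that this candidate set meets the interval in exactly one point, occurring at $k=n-1$. Therefore $t=\tfrac14\sec^2(\tfrac{(n-1)\pi}{3n-2})$, i.e.\ $\nu_{n-1,n}=\tfrac12\sec(\tfrac{(n-1)\pi}{3n-2})$. To obtain the advertised symmetric form I would then rewrite the angle, noting that
$$\frac{\pi}{\frac{n}{n-1}+2}=\frac{\pi}{\frac{n+2(n-1)}{n-1}}=\frac{(n-1)\pi}{3n-2},$$
so the two expressions for $\nu_{n-1,n}$ coincide and the proof is complete.

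As for the main obstacle: essentially all of the genuine difficulty has already been absorbed into the earlier results, chiefly the derivation of the candidate set and the delicate Taylor-estimate arguments of Lemma~\ref{Calculus lem} that isolate a single admissible $k$. Within the proof of the theorem itself the only point demanding care is verifying the two-sided bound $\tfrac{n-1}{2n}(1+\tfrac{n-1}{n})\leq t\leq 1$, which is precisely the hypothesis that licenses the application of Lemma~\ref{Calculus lem}; once it is in hand the conclusion follows immediately, and the final rewriting is routine.
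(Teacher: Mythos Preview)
Your proposal is correct and follows exactly the route the paper takes: the paper presents Theorem~\ref{main theorem} without a separate proof, treating it as the immediate consequence of the candidate set derived after equation~(\ref{t and lambda product is 1}), the two-sided bound coming from Proposition~\ref{distance estimate prop} and the trivial estimate $\nu_{n-1,n}\leq 1$, and the uniqueness statement of Lemma~\ref{Calculus lem}. Your explicit handling of the $n=2$ base case via Theorem~\ref{MacDonald distance formula} is a welcome addition, since the standing hypothesis in \S\ref{sec2} is $n\geq 3$.
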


	\section{Closest Projection-Nilpotent Pairs}\label{Section: Closest proj-nil pairs}

Given a projection $Q$ in $\mm_n(\cc)$ of rank $n-1$ that is of distance $\nu_{n-1,n}$ to $\mc{T}_n$, the following theorem provides a means for determining an element $T\in\mc{T}_n$ that is closest to $Q$. As we will see in Theorem~\ref{summarizing thm}, this element of $\mc{T}_n$ is unique to $Q$.\smallskip

\begin{thm}\textup{\cite{Bini,MacDonaldIdempotents}}\label{Q-N is a multiple of a unitary}
	Fix $\gamma\in[0,\infty)$. An operator $A\in\mm_n(\cc)$ is such that $\|E_{i-1}^\perp AE_i\|=\gamma $ for all\linebreak $i\in\{1,2,\ldots, n\}$ if and only if there exist $T\in\mc{T}_n$ and a unitary $U\in\mm_n(\cc)$ such that $A-T=\gamma U$. Furthermore, if ${\|E_{i-1}^\perp AE_i\|=\gamma}$ and $\|E_i^\perp AE_i\|<\gamma$ for all $i\in\{1,2,\ldots, n-1\}$, then the operators $T$ and $U$ are unique.\smallskip

\end{thm}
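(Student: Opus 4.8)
The plan is to prove the two implications separately, disposing first of the degenerate case $\gamma=0$ (where every corner norm vanishes, forcing $A\in\mc{T}_n$, so one may take $T=A$ and $U$ any unitary) and then normalizing $\gamma=1$ in what follows.

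\textbf{The easy implication.} Suppose $A=T+\gamma U$ with $T\in\mc{T}_n$ and $U$ unitary. The first observation is that $E_{i-1}^\perp T E_i=0$ for every $T\in\mc{T}_n$: a surviving entry of this block lies in a row $j\ge i$ and a column $k\le i$, hence has $j\ge k$, whereas a nonzero entry of a strictly upper triangular matrix requires $j<k$. Therefore $E_{i-1}^\perp A E_i=\gamma\,E_{i-1}^\perp U E_i$, and it remains only to check that $\|E_{i-1}^\perp U E_i\|=1$. The bound $\le 1$ is automatic; for $\ge 1$ I would exhibit a unit vector $v\in\mathrm{ran}(E_i)$ with $Uv\perp e_1,\dots,e_{i-1}$. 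Such a $v$ exists because these are $i-1$ homogeneous linear conditions on the $i$ coordinates of $v$, and since $U$ is isometric, $E_{i-1}^\perp Uv=Uv$ has norm $\|v\|=1$.

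\textbf{The hard implication: construction.} Conversely, assume $\|E_{i-1}^\perp A E_i\|=1$ for all $i$, and write $M:=A-T$. The entries of $M$ on and below the diagonal are forced to equal those of $A$, while the strictly upper entries are free; equivalently, the $k$th column $m_k$ of $M$ has a prescribed tail $\tilde m_k:=E_{k-1}^\perp A e_k\in\mathrm{ran}(E_{k-1}^\perp)$ and a free head in $\mathrm{ran}(E_{k-1})$. Since $E_{k-1}^\perp T E_k=0$, the corner hypotheses pass verbatim to $M$: one has $E_{k-1}^\perp M E_k=E_{k-1}^\perp A E_k$, of norm $1$, for every $k$. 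I would build the columns left to right, maintaining the inductive hypothesis that $m_1,\dots,m_{k-1}$ are orthonormal; the base case is free since $m_1=Ae_1$ has empty head and $\|m_1\|=\|E_0^\perp A E_1\|=1$. At stage $k$ I solve the system $\langle m_k,m_j\rangle=0$ $(j<k)$ for the head of $m_k$. Writing $V_{k-1}=\mathrm{span}(m_1,\dots,m_{k-1})$, this is uniquely solvable precisely when the \emph{non-collapse} condition $V_{k-1}\cap\mathrm{ran}(E_{k-1}^\perp)=\{0\}$ holds, which I would identify with invertibility of the lower-triangular top-left $(k-1)\times(k-1)$ block of $M$ (whose diagonal is $a_{11},\dots,a_{k-1,k-1}$).

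\textbf{The crux: norm preservation.} The delicate step, and the one I expect to be hardest, is showing that the completed column automatically has $\|m_k\|=1$; this is where the corner equality is consumed. Writing $N_k:=E_{k-1}^\perp M E_k$, $\beta_k:=\|m_k\|$, and $P_S$ for the orthogonal projection onto $S$, the orthogonality relations give $M E_k M^*=P_{V_{k-1}}+\beta_k^2 P_{m_k}$, whence $N_kN_k^*=E_{k-1}^\perp\bigl(P_{V_{k-1}}+\beta_k^2 P_{m_k}\bigr)E_{k-1}^\perp$. Evaluating the associated quadratic form on unit $\eta\in\mathrm{ran}(E_{k-1}^\perp)$ and using $\|N_k\|=1$, I would obtain $\beta_k\le 1$ by testing a vector of $W\cap\mathrm{ran}(E_{k-1}^\perp)$ (nonempty, since $\dim W+\dim\mathrm{ran}(E_{k-1}^\perp)=n+1$, where $W=V_{k-1}\oplus\cc m_k$) whose $\hat m_k$-component is nonzero by non-collapse, and $\beta_k\ge 1$ by analysing the maximizer of the form, again invoking non-collapse to exclude $\eta\in V_{k-1}\cap\mathrm{ran}(E_{k-1}^\perp)$. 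Together these force $\beta_k=1$, the inductive hypothesis propagates, and the resulting $M$ is unitary with $A-T=M=\gamma U$. For the existence statement I must still treat the degenerate case where some $a_{jj}=0$ (so non-collapse fails); here I would run a perturbation/limiting argument, or invoke Arveson's formula (Theorem~\ref{Arveson distance formula}) together with Parrott's norm-preserving completion theorem to produce a completion of norm $\gamma$ directly.

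\textbf{Uniqueness.} Finally, under the strict hypotheses $\|E_i^\perp A E_i\|<\gamma$ for $i<n$, I would argue that the leading principal blocks of $M$ are invertible, so non-collapse holds at every stage and each orthogonality system has a unique solution. Hence every head of $M$ is forced, $M$ is uniquely determined, and therefore so are $U=\gamma^{-1}M$ and $T=A-M$.
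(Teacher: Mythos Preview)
The paper does not prove this theorem; it is quoted from \cite{Bini,MacDonaldIdempotents} and used as a black box in \S\ref{Section: Closest proj-nil pairs}, so there is no in-paper argument to compare against. Your column-by-column Gram--Schmidt construction is a sound route, and the easy implication, the ``crux'' computation forcing $\beta_k=1$ under non-collapse, and the uniqueness clause are all correct. In particular, once $m_1,\dots,m_{k-1}$ are orthonormal, your non-collapse condition $V_{k-1}\cap\mathrm{ran}(E_{k-1}^\perp)=\{0\}$ is equivalent to $\|E_{k-1}^\perp A E_{k-1}\|<\gamma$ (a unit $x\in\mathrm{ran}(E_{k-1})$ with $Mx\in\mathrm{ran}(E_{k-1}^\perp)$ witnesses equality, and conversely); this is exactly the strict hypothesis in the uniqueness statement, so that part goes through cleanly.

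The existence argument has two genuine loose ends. First, your characterization of the degenerate case is off: the top-left $(k-1)\times(k-1)$ block of $M$ is \emph{not} lower triangular once heads have been filled in, and its singularity is not detected by the vanishing of some $a_{jj}$. For instance, in $\mm_3(\cc)$ with $a_{11}=1/\sqrt2$, $a_{21}=a_{31}=1/2$, $a_{22}=1/(2\sqrt3)$, $a_{32}=-\sqrt3/2$ one has $\|E_0^\perp AE_1\|=\|E_1^\perp AE_2\|=1$ and $a_{11},a_{22}\neq 0$, yet $\|E_2^\perp AE_2\|=1$ and non-collapse fails at step $3$; the orthogonality system is then under-determined and your $\beta_k=1$ argument no longer applies, since both directions of that argument used non-collapse to rule out $\eta\in V_{k-1}\cap\mathrm{ran}(E_{k-1}^\perp)$. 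Second, your Parrott/Arveson fallback yields only a $T$ with $\|A-T\|\le\gamma$, not one for which \emph{all} singular values of $A-T$ equal $\gamma$, so it does not produce $A-T=\gamma U$ as required. A limiting argument is the natural fix, but carrying it out requires producing perturbations $A^{(m)}\to A$ that simultaneously keep every corner norm equal to $\gamma$ and push every $\|E_i^\perp A^{(m)} E_i\|$ strictly below $\gamma$, and that step should be done rather than asserted.
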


With this result in hand, we are now able to describe all closest pairs $(Q,N)$ where $Q$ is a projection of rank $n-1$ and $N\in\mc{N}(\cc^n).$

\begin{thm}\label{summarizing thm}
Fix a positive integer $n\geq 2$. Let $\{a_i\}_{i=0}^n$ be the sequence given by $a_0=0$ and 
$$\begin{array}{rl}a_k=\displaystyle{\frac{-\nu_{n-1,n}^4+2\nu_{n-1,n}^2a_{k-1}+\nu_{n-1,n}^2-a_{k-1}}{\nu_{n-1,n}^2a_{k-1}+\nu_{n-1,n}^2-a_{k-1}}} & \text{for}\,\,\,k\geq 1.\end{array}\smallskip$$
  
\noindent Let $\{z_i\}_{i=1}^n$ be a sequence of complex numbers of modulus $1$, define $$e=\begin{bmatrix}
z_1\sqrt{a_1-a_0} & z_2\sqrt{a_2-a_1} & \cdots & z_n\sqrt{a_n-a_{n-1}}
\end{bmatrix}^T,\smallskip$$ and let $Q=I-e\otimes e^*$.\smallskip

\begin{itemize}

\item[(i)]$Q$ is a projection of rank $n-1$ such that $\mathrm{dist}(Q,\mc{T}_n)=\nu_{n-1,n}$. Moreover, every projection of rank $n-1$ that is of minimal distance to $\mc{T}_n$ is of this form.\smallskip

\item[(ii)]There is a unique operator $T\in\mc{T}_n$ of minimal distance to $Q$, and this $T$ is such that $Q-T=\nu_{n-1,n} U$ for some unitary $U\in\mm_n(\cc)$. Thus, if $q_k=Qe_k$ and $t_k=Te_k$ denote the columns of $Q$ and $T$, respectively, then one can iteratively determine columns $t_k$ by solving the system of linear equations $$\left\{\begin{array}{rcl}
\langle q_1-t_1,q_k-t_k\rangle & = & 0\\
\langle q_2-t_2,q_k-t_k\rangle & = & 0\\
\vdots\hspace{1cm} &  & \vdots\\
\langle q_{k-1}-t_{k-1},q_k-t_k\rangle & = & 0\end{array}\right.$$
for $k\in\{2,3,\ldots, n\}$.

\end{itemize}
\end{thm}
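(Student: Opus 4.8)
The plan is to treat the two parts separately, drawing throughout on the machinery of \S2 and \S3. For (i), I would first confirm that $e$ is a unit vector: since $\|e\|^2=\sum_{k=1}^n|z_k|^2(a_k-a_{k-1})=\sum_{k=1}^n(a_k-a_{k-1})=a_n-a_0$, and the recursion together with the computations of \S3 guarantee $a_n=h_{\nu_{n-1,n}^2}^{(n)}(0)=1$, we obtain $\|e\|=1$. Hence $P=e\otimes e^*$ is a rank-one projection and $Q=I-P$ is a projection of rank $n-1$. To compute the distance I would apply Theorem~\ref{function definition of Arveson seminorm thm}, which gives $\|E_{k-1}^\perp QE_k\|^2=f(a_{k-1},a_k)$; by construction the sequence $\{a_i\}$ satisfies the recursion of Proposition~\ref{recursive formula prop}, and this is precisely the assertion that $f(a_{k-1},a_k)=\nu_{n-1,n}^2$ for every $k$. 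The Arveson Distance Formula (Theorem~\ref{Arveson distance formula}) then yields $\dist(Q,\mc{T}_n)=\max_k\|E_{k-1}^\perp QE_k\|=\nu_{n-1,n}$. For the converse, I would start from an arbitrary rank $n-1$ projection of minimal distance, invoke Remark~\ref{simplifying the projections} to factor the phases $z_i$ into a diagonal unitary, express $e$ through its partial-trace sequence $\{a_i\}$, and apply Proposition~\ref{recursive formula prop} to see that this sequence satisfies exactly the displayed recursion; since $a_0=0$ pins the whole sequence down, the projection is forced into the stated form.

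For (ii), the key is to apply Theorem~\ref{Q-N is a multiple of a unitary} with $\gamma=\nu_{n-1,n}$. Part (i) shows $\|E_{i-1}^\perp QE_i\|=\gamma$ for all $i$, giving the existence of $T\in\mc{T}_n$ and a unitary $U$ with $Q-T=\gamma U$. To secure uniqueness I must verify the extra hypothesis $\|E_i^\perp QE_i\|<\gamma$ for $i\in\{1,\ldots,n-1\}$. The decisive observation is that $E_i^\perp QE_i=E_i^\perp(I-P)E_i=-E_i^\perp PE_i$ has rank at most one, whence $\|E_i^\perp QE_i\|=\|E_i^\perp e\|\,\|E_ie\|=\sqrt{(1-a_i)a_i}$. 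Since $a_i(1-a_i)\le\tfrac14$, while $\nu_{n-1,n}=\tfrac12\sec\!\left(\tfrac{(n-1)\pi}{3n-2}\right)\ge\tfrac{1}{\sqrt2}$ for every $n\ge2$, we get $\|E_i^\perp QE_i\|\le\tfrac12<\tfrac{1}{\sqrt2}\le\nu_{n-1,n}$, so the strict inequality holds and $T,U$ are unique. Finally, writing $q_k-t_k=\gamma Ue_k$ and using that the columns of $\gamma U$ are mutually orthogonal produces the stated relations $\langle q_j-t_j,q_k-t_k\rangle=0$ for $j<k$; since $T\in\mc{T}_n$ forces $t_k$ to be supported on $e_1,\ldots,e_{k-1}$, the $k$-th column carries only $k-1$ unknowns subject to $k-1$ equations, and these may be solved successively for $k=2,\ldots,n$.

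The main obstacle I anticipate is the strict inequality needed for uniqueness, namely ruling out $\|E_i^\perp QE_i\|=\gamma$. The rank-one reduction $E_i^\perp QE_i=-E_i^\perp PE_i$ makes this transparent, collapsing the question to the elementary estimate $a_i(1-a_i)\le\tfrac14<\nu_{n-1,n}^2$; without exploiting the special structure $Q=I-e\otimes e^*$, this comparison of operator norms would be far more delicate. A secondary point requiring care is confirming that each linear system in the iterative description is genuinely solvable for a unique $t_k$, which ultimately rests on the uniqueness of $T$ established above together with the triangular support of the columns.
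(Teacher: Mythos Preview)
Your argument is correct, and for part (i) it coincides with the paper's, which simply defers to the results of \S2 and \S3.

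For part (ii) you reach the same conclusion as the paper but by a genuinely shorter route to the strict inequality $\|E_i^\perp QE_i\|<\nu_{n-1,n}$ needed to invoke the uniqueness clause of Theorem~\ref{Q-N is a multiple of a unitary}. The paper writes $Q_k'=E_k^\perp QE_k$ as the block of $Q_k=E_{k-1}^\perp QE_k$ obtained by deleting the top row, computes every entry of $B_k'=(Q_k')^*Q_k'$ via the formulas of Lemma~\ref{Finding the entries and eigenvalues of Q_k^*Q_k}, recognizes $B_k'=(1-a_k)(I-\widehat Q)$ with $\widehat Q$ the $k$-th leading principal submatrix of $Q$, and deduces $\|B_k'\|=\mathrm{Tr}(B_k')=a_k(1-a_k)$ from the rank-one structure. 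It then argues by contradiction: equality $f(a_{k-1},a_k)=a_k(1-a_k)$ forces $a_k=1$ or $a_k=a_{k-1}$, either of which pushes some $\|Q_j\|$ up to $1$ and violates minimality (using only $\nu_{n-1,n}<1$). Your observation $E_i^\perp QE_i=-E_i^\perp PE_i=-(E_i^\perp e)\otimes(E_ie)^*$ obtains the identical norm $\sqrt{a_i(1-a_i)}$ in a single line, and the numerical comparison $a_i(1-a_i)\le\tfrac14<\tfrac12\le\nu_{n-1,n}^2$ replaces the paper's case analysis with an elementary bound. The trade-off is that you invoke the sharper quantitative estimate $\nu_{n-1,n}\ge\tfrac{1}{\sqrt2}$ coming from the explicit formula of Theorem~\ref{main theorem}, whereas the paper's contradiction needs only $\nu_{n-1,n}<1$; since the explicit value is already in hand at this stage, your shortcut is entirely legitimate and noticeably cleaner.
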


\begin{proof}

	Statement (i) follows immediately from the results of $\S2$ and $\S3$. For statement (ii), the existence of $T$ and $U$ is guaranteed by Theorems~\ref{Arveson seminorms must be equal theorem} and \ref{Q-N is a multiple of a unitary}. All that remains to show is the uniqueness of these operators.
	
 	To accomplish this task, note that it suffices to prove uniqueness in the case that $z_i=1$ for all $i$ (i.e., when $q_{ij}\leq 0$ for all $i\neq j$). For $k\in\{1,2,\ldots, n\}$, let $Q_k$ denote the restriction of $E_{k-1}^\perp QE_k$ to the range of $E_k$, and define $B_k\coloneqq Q_k^*Q_k$. Let $Q_k^\prime=E_k^\perp Q_k$, so that
	$$Q_k=\begin{bmatrix}
	\begin{array}{cc}
	v_k^* \vspace{-0.25cm}\\
	\rule{0.7cm}{0.01cm}\end{array}\\
	Q_k^\prime
	\end{bmatrix},$$
where $v_k\coloneqq\begin{bmatrix}
	q_{k1} & q_{k2} & \ldots & q_{kk}
	\end{bmatrix}^T$.
		
We will demonstrate that $\|Q_k^\prime\|<\|Q_k\|$ for all $k\in\{1,2,\ldots,n-1\}$, and therefore obtain the uniqueness of $T$ and $U$ via Theorem~\ref{Q-N is a multiple of a unitary}. Observe that this inequality holds when $k=1$, as $$\|Q_1\|^2-\|Q_1^\prime\|^2=q_{11}^2=\nu_{n-1,n}>0.\smallskip$$ Suppose now that $k\in\{2,3,\ldots, n-1\}$ is fixed, and define $B_k^\prime\coloneqq {Q_k^\prime}^*Q_k^\prime=B_k-v_kv_k^*.$ One may determine the entries of $B_k^\prime=(b_{ij}^\prime)$ using the formulas for the entries of $B_k=(b_{ij})$ from Lemma~\ref{Finding the entries and eigenvalues of Q_k^*Q_k}~(i). Indeed, 
	$$\begin{array}{rclcl}
	b_{kk}^\prime&=&\displaystyle{b_{kk}-q_{kk}^2}\vspace{0.1cm}\\
	&=&\displaystyle{q_{kk}-a_{k-1}(1-q_{kk})-q_{kk}^2}\vspace{0.1cm}\\
	&=&\displaystyle{(q_{kk}-a_{k-1})(1-q_{kk})}\hspace{0.2cm} = \hspace{0.2cm} \displaystyle{(1-a_k)(1-q_{kk})},
	\end{array}$$
	
	\noindent and for if $i<k$,
	$$\begin{array}{rclcl}
	b_{ii}^\prime&=&\displaystyle{b_{ii}-q_{ki}^2}\vspace{0.1cm}\\
	&=&\displaystyle{(1-a_{k-1})(1-q_{ii})-(1-q_{kk})(1-q_{ii})}\vspace{0.1cm}\\
	&=&\displaystyle{(q_{kk}-a_{k-1})(1-q_{ii})}\hspace{0.2cm} = \hspace{0.2cm} \displaystyle{(1-a_k)(1-q_{ii})}.
	\end{array}$$
	
	\noindent If $i,j,$ and $k$ are all distinct, then	
	$$\begin{array}{rclcl}
	b_{ij}^\prime&=&\displaystyle{b_{ij}-q_{ki}q_{kj}}\vspace{0.1cm}\\
	&=&\displaystyle{-(1-a_{k-1})q_{ij}+q_{ij}(1-q_{kk})}\vspace{0.1cm}\\
	&=&\displaystyle{-(q_{kk}-a_{k-1})q_{ij}}\vspace{0.1cm}\hspace{0.2cm}=\hspace{0.2cm}\displaystyle{-(1-a_k)q_{ij}.}
	\end{array}$$
	
	\noindent Finally, either $i<j=k$ or $j<i=k$. In the case of former, we have
	$$\begin{array}{rclcl}
	b_{ik}^\prime&=&\displaystyle{b_{ik}-q_{ki}q_{kk}}\vspace{0.1cm}\\
	&=&\displaystyle{a_{k-1}q_{ik}-q_{ik}q_{kk}}\vspace{0.1cm}\\
	&=&\displaystyle{-(q_{kk}-a_{k-1})q_{ik}}\hspace{0.2cm} = \hspace{0.2cm} \displaystyle{-(1-a_k)q_{ik}}.
	\end{array}\smallskip$$
	\noindent The fact that $B_k^\prime$ is self-adjoint implies that $b^\prime_{kj}=-(1-a_k)q_{kj}$ for all $j<k$ as well. 
	
	The above expressions for the entries $b_{ij}^\prime$ reveal that 
	$B_k^\prime=(1-a_k)(I-\widehat{Q}),$
	where $\widehat{Q}\in\mm_k(\cc)$ denotes the $k^{th}$ leading principal submatrix of $Q$. 
%	Since $Q$ has rank $n-1$, 
%	Corollary~\ref{Eigenvalue multiplicity corollary}~(i) implies that $\lambda=1$ occurs as an eigenvalue of $\widehat{Q}$ with multiplicity at least $k-1$, and hence $0$ occurs as an eigenvalue of $B_k^\prime$ with multiplicity at least $k-1$. 
%	
%	
	Since $Q$ has rank $n-1$, it follows that $I-\widehat{Q}$ has rank at most $1$, and hence $B_k^\prime$ has at most one non-zero eigenvalue. Consequently,
	$$\|B_k^\prime\|=\mathrm{Tr}(B_k^\prime)=\sum_{\ell=1}^{k}(1-a_{k})(1-q_{\ell\ell})=a_k(1-a_{k}).$$
	
Now let $f:[0,1]\times[0,1]\rightarrow\rr$ denote the function from Theorem~\ref{function definition of Arveson seminorm thm}, so that $\|Q_k\|^2=f(a_{k-1},a_k)$. Suppose for the sake of contradiction that $\|B_k\|=\|B_k^\prime\|$, and hence $f(a_{k-1},a_k)=a_k(1-a_k)$. One may verify that for this equation to hold, we necessarily have that $a_{k}=1$ or $a_k=a_{k-1}$. 
 
 If the former is true, then $a_j=1$ for all $j\geq k$. In particular, $a_{n-1}=a_n$. From this it follows that $q_{nn}=1-(a_n-a_{n-1})=1$, and hence $\|Q_n\|\geq 1$. This contradicts the minimality of $\mathrm{dist}(Q,\mc{T}_n)$. If instead $a_k=a_{k-1}$, then $q_{kk}=1$, and thus $\|Q_k\|\geq 1$. Again we reach a contradiction. We therefore conclude that $\|B_k^\prime\|<\|B_k\|$, hence $\|Q_k^\prime\|<\|Q_k\|$.
\end{proof}
\smallskip

To save the reader from lengthy computations, we have included a few examples of pairs $(Q,T)$ where $Q\in\mm_n(\cc)$ is a projection of rank $n-1$, $T$ belongs to $\mc{T}_n$, and $\|Q-T\|=\nu_{n-1,n}$. Theorem~\ref{summarizing thm} implies that if $(Q^\prime,T^\prime)$ is any other projection-nilpotent pair such that $rank(Q^\prime)=n-1$ and $\|Q^\prime-T^\prime\|=\nu_{n-1,n}$, then there is a unitary $V\in\mm_n(\cc)$ such that  $Q^\prime=V^*QV$ and $T^\prime=V^*TV$. In each case the entries of $Q$ and $T$ have been rounded to the fifth decimal place.\\

\noindent\underline{$n=3$}\\

\vspace{-0.3cm}
$\begin{array}{l}
Q=\begin{bmatrix}
\phantom{-}0.64310 & -0.31960 & -0.35689\\
-0.31960 & \phantom{-}0.71379 & -0.31960\\
-0.35689 & -0.31960 & \phantom{-}0.64310
\end{bmatrix},\vspace{0.3cm} \\  
T=\begin{bmatrix}
0 & -0.49697 & -0.80194\\
0 & 0 & -0.49697\\
0 & 0 & 0
\end{bmatrix};\smallskip
\end{array}\vspace{0.75cm}$ 

\noindent\underline{$n=4$}\\

\vspace{-0.3cm}
$
\begin{array}{l}
Q=\begin{bmatrix}
  \phantom{-}0.72361  & -0.24860 & -0.24860& -0.27639\\
  -0.24860& \phantom{-}0.77639 & -0.22361& -0.24860\\
  -0.24860& -0.22361& \phantom{-}0.77639 &-0.24860 \\
 -0.27639 & -0.24860& -0.24860& \phantom{-}0.72361
\end{bmatrix},\vspace{0.3cm}\\
T=\begin{bmatrix}
0 & -0.34356 & -0.46094 & -0.65836\\
0 & 0 & -0.34164 & -0.46094 \\
0 & 0 & 0 & -0.34356 \\
0 & 0 & 0 &  0
\end{bmatrix};
\end{array}\vspace{0.75cm} $

\noindent\underline{$n=5$}\\

\vspace{-0.3cm}
$
\begin{array}{l}
Q=\begin{bmatrix}
\phantom{-}0.77471 & -0.20512 & -0.19907 & -0.20512 & -0.22528\\
-0.20512 & \phantom{-}0.81324 & -0.18126 & -0.18676 & -0.20512\\
-0.19907 & -0.18126 & \phantom{-}0.82409 & -0.18126 & -0.19907 \\
 -0.20512 & -0.18676 & -0.18126 & \phantom{-}0.81324 & -0.20512 \\
-0.22528 & -0.20512 & -0.19907 & -0.20512 & \phantom{-}0.77472
\end{bmatrix},\vspace{0.3cm} \\
T=\begin{bmatrix}
0 & -0.26477 & -0.32678 & -0.41846 & -0.55566\\
0 & 0 & -0.26373 & -0.32453 & -0.41846 \\
0 & 0 & 0 & -0.26373 & -0.32678 \\
0 & 0 & 0 & 0 & -0.26477 \\
0 & 0 & 0 & 0 & 0
\end{bmatrix}.
\end{array}\vspace{1cm}$

It is interesting to note that each projection above is symmetric about its \textit{anti-diagonal}, the diagonal from the $(n,1)-$entry to the $(1,n)-$entry. This symmetry is in fact, always present in the optimal projection $Q=(q_{ij})$ from Theorem~\ref{summarizing thm} obtained by taking $z_i=1$ for all $i$. To see this, first observe that the function $h_t$ from equation~(\ref{ht equation}) satisfies the identity
$$\begin{array}{rl}
h_t(x)+h_t^{-1}(1-x)=1, & x\in[0,1].
\end{array}\smallskip$$
From here we have that 
$a_1+a_{n-1}=h_t(0)+h_t^{-1}(1)=1$, and by induction,
$$a_k+a_{n-k}=h_t(a_{k-1})+h_t^{-1}(a_{n-k+1})=h_t(a_{k-1})+h_t^{-1}(1-a_{k-1})=1\smallskip$$ for all $k\in\{1,2,\ldots, n\}$. Consequently, 
$$
\begin{array}{rcl}
q_{kk}&=&1-(a_k-a_{k-1})\vspace{0.1cm}\\
&=&a_{n-k}+a_{k-1}\vspace{0.1cm}\\
&=&a_{n-k}+(1-a_{n-k+1})\vspace{0.1cm}\\
&=&1-(a_{n-k+1}-a_{n-k})=q_{n-k+1,n-k+1}
\end{array}\smallskip
$$

 \noindent for all $k$. We now turn to the identity $q_{ij}=-\sqrt{(1-q_{ii})(1-q_{jj})}$ to conclude that that $q_{ij}=q_{n-j+1,n-i+1}$ for all $i$ and $j$, which is exactly the statement that $Q$ is symmetric about its anti-diagonal. An analogous argument using the formulas from \cite{MacDonaldProjections} demonstrates a similar phenomenon for optimal projections of rank $1$.
 
 \section{Conclusion}

The distance $\nu_{r,n}$ from the set of projections in $\mm_n(\cc)$ of rank $r$ to the set of nilpotent operators on $\cc^n$, as well as the corresponding closest projection-nilpotent pairs, are now well understood when $r=1$ or $r=n-1$. Of course, it is natural to wonder about the value of $\nu_{r,n}$ for $r$ strictly between $1$ and $n-1$. 

The difficulty in extending the above arguments to projections $P$ of intermediate ranks lies in deriving closed-form expressions for $\|E_{i-1}^\perp PE_i\|$. Computing these norms for projections of rank $r=1$ or $r=n-1$ was made possible by the simple structure afforded by such projections. In particular, we made frequent use of equations~(\ref{projection from sequence}) and (\ref{projection structure equation}) throughout the proofs of Lemma~\ref{Finding the entries and eigenvalues of Q_k^*Q_k} and Theorem~\ref{function definition of Arveson seminorm thm} to relate the off-diagonal entries of such projections to those on the diagonal. Analogous equations for projections of intermediate ranks become significantly more complex.

For small values of $r$ and $n$, the mathematical programming software \textit{Maple} was used to construct examples of rank $r$ projections $P_{r,n}$ in $\mm_n(\cc)$ which we believe are of minimal distance to $\mc{T}_n$. To ease the computations, the program was tasked with minimizing the maximum norm $\|E_{i-1}^\perp PE_i\|$ over all projections $P$ of rank $r$ with real entries and symmetry about the anti-diagonal. While it may not always be possible for such conditions to be met by an optimal projection of rank $r$, the computations that follow may still shed light on a potential formula for $\nu_{r,n}$.

The smallest value of $n$ for which $\mc{P}(\cc^n)$ contains projections of intermediate ranks is $n=4$. In this case, the intermediate-rank projections are those of rank $2$. We found that
$$P_{2,4}=\begin{bmatrix}
1/2 & 1/2 & 0 & 0\\
1/2 & 1/2 & 0 & 0\\
0 & 0 & 1/2 & 1/2\\
0 & 0 & 1/2 & 1/2
\end{bmatrix}$$
is an optimal projection of rank $2$ satisfying the conditions above. It is easy to see that $$\begin{array}{rl}\|E_{i-1}^\perp P_{2,4}E_i\|=1/\sqrt{2}=\nu_{1,2} & \text{for all}\,\,\,i,\end{array}\smallskip $$and hence $P_{2,4}$ is a direct sum of optimal rank-one projections in $\mm_2(\cc)$.

In $\mm_5(\cc)$, the intermediate-rank projections are those of rank $r=2$ or $r=3$. For such $r$, we obtained
$$\begin{array}{cc}P_{2,5}=\begin{bmatrix}
\phantom{-}0.42602 & -0.07632 & \phantom{-}0.22568 & \phantom{-}0.42334 & -0.09248\\
-0.07632 & \phantom{-}0.42127 & \phantom{-}0.23481 & -0.06022 & \phantom{-}0.42334 \\
\phantom{-}0.22568 & \phantom{-}0.23481 & \phantom{-}0.30541 & \phantom{-}0.23481 & \phantom{-}0.22568 \\
\phantom{-}0.42334 & -0.06022 & \phantom{-}0.23481 & \phantom{-}0.42127 & -0.07632 \\
-0.09248 & \phantom{-}0.42334 & \phantom{-}0.22568 & -0.07632 & \phantom{-}0.42602
\end{bmatrix}\phantom{.} & \text{and}\vspace{0.3cm}\\P_{3,5}=\begin{bmatrix}
\phantom{-}0.58296 & -0.29271  & -0.10684 & \phantom{-}0.12213 & \phantom{-}0.36209 \\
-0.29271& \phantom{-}0.62479 & -0.33169 & -0.15433 & \phantom{-}0.12213\\
-0.10684 & -0.33169 & \phantom{-}0.58448 & -0.33169 & -0.10684\\
\phantom{-}0.12213 & -0.15433 & -0.33169 & \phantom{-}0.62479 &-0.29271\\
\phantom{-}0.36209 & \phantom{-}0.12213 & -0.10684 & -0.29271 & \phantom{-}0.58296
\end{bmatrix},\end{array}\smallskip$$
with entries rounded to the fifth decimal place. Again, the norms $\|E_{i-1}^\perp P_{r,n}E_i\|$ share a common value, with $$\begin{array}{rl}
\displaystyle{\|E_{i-1}^\perp P_{2,5}E_i\|\approx 0.65270\approx\frac{1}{2}\sec\left(\frac{\pi}{\frac{5}{2}+2}\right)} & \text{for all}\,\,i,\,\,\,\,\text{and}\vspace{0.2cm}\\
\displaystyle{\|E_{i-1}^\perp P_{3,5}E_i\|\approx 0.76352\approx\frac{1}{2}\sec\left(\frac{\pi}{\frac{5}{3}+2}\right)} & \text{for all}\,\,i.
\end{array}\smallskip$$

In light of these findings, as well as the distance formulas that exist for projections of rank $1$ or $n-1$, we propose the following generalized distance formula for  projections of arbitrary rank. 

\begin{conj}\label{rank $k$ proj conj}
	For every $n\in\nn$ and each $r\in\{1,2,\ldots, n\}$, the distance from the set of projections in $\mm_n(\cc)$ of rank $r$ to $\mc{N}(\cc^n)$ is
		$$\nu_{r,n}=\frac{1}{2}\sec\left(\frac{\pi}{\frac{n}{r}+2}\right).$$
\end{conj}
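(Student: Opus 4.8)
The plan is to prove Conjecture~\ref{rank $k$ proj conj} by generalizing the three-stage pipeline of \S2--\S4 to projections of arbitrary rank $r$. It is convenient to set $m\coloneqq n+2r$, so that the conjectured value is exactly $\frac12\sec\!\big(r\pi/m\big)$; this reduces to the modulus $n+2$ of Theorem~\ref{MacDonald distance formula} when $r=1$ and to $3n-2=n+2(n-1)$ of Theorem~\ref{main theorem} when $r=n-1$, and the whole expression depends on $r$ and $n$ only through the ratio $n/r$. The target is therefore to show that the candidate set is $\big\{\tfrac12\sec(k\pi/(n+2r)):k\in\zz\big\}$ and that the lower bound of Proposition~\ref{distance estimate prop} selects $k=r$.

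First I would establish the analog of Theorem~\ref{Arveson seminorms must be equal theorem}: a rank-$r$ projection $P$ of minimal distance to $\mc{T}_n$ has $\|E_{i-1}^\perp P E_i\|$ independent of $i$. For $r=1,n-1$ this followed from the scalar monotonicity of $f$ (Lemma~\ref{f increasing in x, decreasing in y lemma}) applied to the partial-trace sequence, but for intermediate $r$ there is no such one-parameter description, so I would argue variationally at the minimizer. Writing $P=VV^*$ with $V^*V=I_r$, the tangent space to the rank-$r$ projection manifold at $P$ is $\{P^\perp X P+P X P^\perp:X=X^*\}$, of real dimension $2r(n-r)$, and I would show that if some seminorm lies strictly below $\mu\coloneqq\max_i\|E_{i-1}^\perp P E_i\|$ then one can move along this manifold so as to strictly decrease every seminorm currently equal to $\mu$, contradicting minimality. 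Making precise the claim that the active seminorms span enough tangent directions is the first genuine obstacle.

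The second and decisive obstacle is to obtain closed-form norms and a solvable recursion. For $r=1,n-1$ the square $\|E_{k-1}^\perp P E_k\|^2$ was a fixed function $f(a_{k-1},a_k)$ of two consecutive partial traces (Theorem~\ref{function definition of Arveson seminorm thm}), because identities~(\ref{projection from sequence})--(\ref{projection structure equation}) expressed every off-diagonal entry through the diagonal; for intermediate $r$ these identities fail and $E_{k-1}^\perp P E_k$ has rank up to $r$, so its squared norm is the top root of a degree-$r$ characteristic polynomial rather than of a $2\times2$ system. I would first isolate the correct normal form of an equalized optimal $P$, guided by the numerics in \S5 (real entries, symmetry about the anti-diagonal, and the evident banded low-complexity pattern), and then seek a transfer-matrix recursion on an $(r+1)$-dimensional auxiliary space that replaces the M\"obius map $h_t$ of~(\ref{ht equation}). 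Diagonalizing this transfer matrix and imposing the boundary condition $\tr(P)=r$ should force a quantity of the shape $(1+iy)/(1-iy)$ to be an $m$-th root of unity with $m=n+2r$, yielding the candidate set above, exactly as the surviving factor $t^2$ produced the modulus $3n-2$ in \S3. Producing this recursion in closed form for all $r$ is where I expect the work to concentrate.

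Granting the candidate set, I would select $k=r$ via Proposition~\ref{distance estimate prop}, proving by estimates parallel to Lemmas~\ref{Inequality works when k=1} and~\ref{Calculus lem}, but now carrying $r$ as a parameter, that $\frac12\sec(r\pi/(n+2r))$ is the unique candidate in $\big[\sqrt{\tfrac{r}{2n}(1+\tfrac r n)},\,1\big]$; the identity $\cos^2 x-\cos^2 y=-\sin(x-y)\sin(x+y)$ and the monotonicity trick used there should transfer, though the Taylor bounds must be redone. For the matching achiever, the divisible case $r\mid n$ is already clean: the block-diagonal sum of $r$ copies of MacDonald's optimal rank-one projection in $\mm_{n/r}(\cc)$ is a rank-$r$ projection all of whose Arveson seminorms equal $\nu_{1,n/r}=\frac12\sec(\pi/(n/r+2))$, since the vanishing cross-block entries reduce each seminorm at index $i$ to a within-block rank-one seminorm, whence Theorem~\ref{Arveson distance formula} gives the distance exactly; for general $r$ the achiever would be reconstructed from the recurrence data as in Theorem~\ref{summarizing thm}. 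The crux on which everything rests remains the second stage: absent the tensor structure available when $r=1$ or $r=n-1$, extracting a closed-form, diagonalizable recursion for the Arveson seminorms of an intermediate-rank projection is the hard part of the argument.
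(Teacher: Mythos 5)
This statement is Conjecture~\ref{rank $k$ proj conj}: the paper does not prove it and offers no proof to compare against. Its only support in \S5 is numerical (the \textit{Maple} examples $P_{2,4}$, $P_{2,5}$, $P_{3,5}$ and the \textit{PARI/GP} estimates) plus the remark that the formula would imply $\nu_{r,n}=\nu_{kr,kn}$ and hence Conjecture~\ref{MacDonald's Conjecture}; indeed the paper explicitly explains why the method of \S2--\S4 breaks down for intermediate ranks, namely that the identities (\ref{projection from sequence})--(\ref{projection structure equation}) expressing off-diagonal entries through diagonal ones are special to ranks $1$ and $n-1$. Your proposal must therefore stand on its own, and it does not: it is a program, not a proof. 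You yourself flag the two load-bearing steps as open---the equalization of the Arveson seminorms at an intermediate-rank minimizer (your tangent-space argument is only a heuristic; nothing shows the active seminorms can be simultaneously and strictly decreased along the rank-$r$ projection manifold), and, decisively, the closed-form transfer-matrix recursion whose diagonalization is supposed to force $(1+iy)/(1-iy)$ to be an $(n+2r)$-th root of unity. That modulus is reverse-engineered from the conjectured answer, not derived; producing it is essentially the whole problem. The one fully correct ingredient is the divisible-case upper bound: when $r\mid n$, the direct sum of $r$ copies of MacDonald's optimal rank-one projection in $\mm_{n/r}(\cc)$ does give $\nu_{r,n}\leq\frac{1}{2}\sec\left(\frac{\pi}{n/r+2}\right)$, matching the paper's own remark---but an upper bound is far from the conjecture.

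Moreover, your stage three would fail even if the candidate set were granted. With $m=n+2r$ and $t=\frac{1}{4}\sec^2(k\pi/m)$, the constraints $t\leq 1$ and Proposition~\ref{distance estimate prop} amount to $\frac{1}{4}\leq\cos^2\left(\frac{k\pi}{m}\right)\leq\frac{n^2}{2r(n+r)}$. For $r=n-1$ the window has width $\beta(n)=\frac{n^2}{2(n-1)(2n-1)}-\frac{1}{4}=O(1/n)$, which is why Lemmas~\ref{Inequality works when k=1} and~\ref{Calculus lem} can isolate $k=n-1$. But for intermediate ranks the window has length bounded away from zero: taking $r=n/2$ gives $m=2n$ and the window $\left[\frac{1}{4},\frac{2}{3}\right]$, which contains $\cos^2(k\pi/2n)$ for on the order of $n$ integers $k$ (in particular $k=r\pm 1$ for all large $n$, since consecutive candidates are only $O(1/n)$ apart). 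So MacDonald's lower bound cannot select $k=r$ for intermediate ranks, and you would need a genuinely different selection mechanism---for instance, $r$-dependent admissibility conditions on the analogue of the partial-trace sequence (monotonicity, values in $[0,1]$, the boundary condition $a_n=r$), as in MacDonald's original rank-one argument---before the pipeline could close.
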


Using a random walk process implemented by the computer algebra system \textit{PARI/GP}, we estimated the values of $\nu_{r,n}$ for all $r\leq n\leq 10$ without the additional assumptions described above. We observed only minute differences between these estimates and the expression from Conjecture~\ref{rank $k$ proj conj}. In many cases, these quantities differed by no more than $1\times 10^{-3}$.

The proposed formula from Conjecture~\ref{rank $k$ proj conj} merits several interesting consequences. Firstly, this formula suggests that $\nu_{r,n}=\nu_{kr,kn}$ for every positive integer $k$, meaning that a closest projection of rank $kr$ to $\mc{T}_{kn}$ could be obtained as a direct sum of $k$ closest projections of rank $r$ to $\mc{T}_n$. 
%From this it would follow that for some $n$ and $r\neq 1,n-1$, there exist optimal projection-nilpotent pairs that are \textit{not} diagonally unitarily equivalent. Indeed, the formula $\nu_{r,n}=\nu_{kr,kn}$ would imply that the pairs $(P_1,T_1)$ and $(P_2,T_2)$ given by
%$$\begin{array}{ccc}
%P_1=\begin{bmatrix}
%1/2 & 1/2 & 0 & 0\\
%1/2 & 1/2 & 0 & 0\\
%0 & 0 & 1/2 & 1/2\\
%0 & 0 & 1/2 & 1/2
%\end{bmatrix}, & T_1=\begin{bmatrix}
%0 & 1/2 & 0 & 0\\
%0 & 0 & 0 & 0\\
%0 & 0 & 0 & 1/2\\
%0 & 0 & 0 & 0
%\end{bmatrix} & \text{and}\vspace{0.3cm}\\
%P_2=\begin{bmatrix}
%1/2 & 0 & 1/2 & 0\\
%0 & 1/2 & 0 & 1/2\\
%1/2 & 0 & 1/2 & 0\\
%0 & 1/2 & 0 & 1/2
%\end{bmatrix}, & T_2=\begin{bmatrix}
%0 & 0 & 1/2 & 0\\
%0 & 0 & 0 & 1/2\\
%0 & 0 & 0 & 0\\
%0 & 0 & 0 & 0
%\end{bmatrix}\end{array}\smallskip$$
%are optimal for $n=4$ and $r=2$, yet there clearly does not exist a diagonal unitary $U$ such that $P_1=U^*P_2U$. Compare this observation with \cite[Theorem~6]{MacDonaldProjections} in the case that $r=1$, and Theorem~\ref{summarizing thm} in the case that $r=n-1$.
Notice as well that if the equation $\nu_{r,n}=\nu_{kr,kn}$ were true, it would follow that
$\nu_{1,n}=\nu_{r,rn}\leq \nu_{r,n}$
for each $n$ and $r$. Thus, a proof of Conjecture~\ref{rank $k$ proj conj}---or of the formula $\nu_{r,n}=\nu_{kr,kn}$---would validate Conjecture~\ref{MacDonald's Conjecture}.

\section*{Acknowledgements}
The author would like to thank Paul Skoufranis for many stimulating conversations and Boyu Li for providing the \textit{PARI/GP} script used to estimate $\nu_{r,n}$.

\vspace{0.2cm}

	\bibliography{DPNbib}
	\bibliographystyle{plain}
	
	\medskip
	
	\Addresses

\end{document}